\documentclass{amsart}
\title{Normed symmetric monoidal categories}
\author{Jonathan Rubin}
\date{\today}
\usepackage{amsmath,amssymb,amsthm,stackrel,stmaryrd,tikz,enumitem,mathrsfs,physics,graphicx}

\newcommand{\s}[1]{\mathscr{#1}}
\newcommand{\tn}{\otimes}
\newcommand{\btn}{\bigotimes}
\newcommand{\bsq}{\bigsqcup\!{}}
\newcommand{\btnt}{\!\!\! \begin{array}{c} \bigotimes_T \end{array} \!\!\!}
\newcommand{\sbtn}{\!\!\! \begin{array}{c} \bigotimes \end{array} \!\!\!}
\newcommand{\ul}[1]{\underline{#1}}
\newcommand{\ol}[1]{\overline{#1}}
\newcommand{\ub}[1]{\underline{\mathbf{#1}}}

\renewcommand{\b}[1]{\mathbf{#1}}
\newcommand{\bb}[1]{\mathbb{#1}}
\renewcommand{\c}[1]{\mathcal{#1}}
\newcommand{\til}[1]{\widetilde{#1}}
\renewcommand{\t}[1]{\textnormal{#1}}
\newcommand{\vc}{\bullet}

\newcommand{\ups}{\upsilon}

\newcommand{\ve}{\varepsilon}
\newcommand{\vp}{\varphi}

\newcommand{\vd}{\partial}

\newcommand{\id}{\textnormal{id}}

\newcommand{\la}{\langle}
\newcommand{\ra}{\rangle}

\theoremstyle{plain}
\newtheorem*{thm*}{Theorem}
\newtheorem{thm}{Theorem}[section]

\newtheorem{prop}[thm]{Proposition}
\newtheorem{cor}[thm]{Corollary}
\newtheorem*{cor*}{Corollary}
\newtheorem{lem}[thm]{Lemma}

\newtheorem*{thm1}{Theorem \protect{\ref{thm:coh}}}
\newtheorem*{thm2}{Theorem \protect{\ref{thm:NSMCmodel}}}
\newtheorem*{thm3}{Proposition \protect{\ref{prop:TGCnorms}}}

\theoremstyle{definition}
\newtheorem{defn}[thm]{Definition}
\newtheorem{ex}[thm]{Example}
\newtheorem*{ex*}{Example}
\newtheorem{nonex}[thm]{Nonexample}
\newtheorem{nota}[thm]{Notation}

\newtheorem{conv}[thm]{Convention}
\newtheorem{const}[thm]{Construction}

\theoremstyle{remark}
\newtheorem{rem}[thm]{Remark}
\newtheorem*{rem*}{Remark}

\setcounter{tocdepth}{1}

\begin{document}

\maketitle

\begin{abstract} We introduce categorical models of $N_\infty$ spaces, which we call normed symmetric monoidal categories (NSMCs). These are ordinary symmetric monoidal categories equipped with compatible families of norm maps, and when specialized to a particular class of examples, they reveal a connection between the equivariant symmetric monoidal categories of Guillou--May--Merling--Osorno and those of Hill--Hopkins. We also give an operadic interpretation of the Mac Lane coherence theorem and generalize it to include NSMCs. Among other things, this theorem ensures that the classifying space of a NSMC is a $N_\infty$ space. We conclude by extending our coherence theorem to include NSMCs with strict relations. 
\end{abstract}

\tableofcontents

\section{Introduction}

Although category theory is often regarded as a language for describing mathematics, one can also study categories as homotopical objects in their own right. From the latter perspective, a category is a simultaneous generalization of a poset and a monoid, and as explained by Segal \cite{Segalclass},  every small category $\s{C}$ gives rise to a classifying space $B\s{C}$ that encodes the relations in $\s{C}$. This is a CW complex with a vertex for each object of $\s{C}$, an edge for each morphism of $\s{C}$, a $2$-cell for each composable pair of morphisms in $\s{C}$, an so on.

The classifying space functor has a number of pleasant properties, and extra structure on a category frequently gives rise to extra structure on its classifying space. In particular, if $\s{C}$ is a symmetric monoidal category, i.e. a category equipped with an operation that is associative, unital, and commutative up to coherent natural isomorphism, then by work of Segal \cite{Segal} and May \cite{MayGILS}, \cite{MayPerm}, its classifying space $B\s{C}$ is an $E_\infty$ space, i.e. a space that is equipped with an operation that is associative, unital, and commutative up to coherent homotopy. 

That being said, specifying an $E_\infty$ structure on a space generally requires far more data than specifying a symmetric monoidal structure on a category. From an operadic standpoint, an $E_\infty$ structure on a space $X$ consists of a compatible family of structure maps $E\Sigma_n \times X^{\times n} \to X$ for all $n \geq 0$. The vertices of $E\Sigma_n$ correspond to $n$-ary operations, the cells of $E\Sigma_n$ correspond to (higher) homotopies between $n$-ary operations, and since $E\Sigma_n$ has infinitely many cells when $n > 1$, an $E_\infty$ structure on $X$ consists of infinitely many operations and infinitely many coherence relations between them. On the other hand, a symmetric monoidal structure on a category $\s{C}$ consists of a product $\otimes$, a unit object $e$, and four natural isomorphisms $\alpha$, $\lambda$, $\rho$, and $\beta$ that witness the associativity, unitality, and commutativity of $\otimes$. These data are required to make four diagrams commute, and from here the Mac Lane coherence theorem \cite{MacLane}, along with Kelly's simplifications \cite{Kelly}, guarantees that all sensible diagrams built from $\alpha$, $\lambda$, $\rho$, and $\beta$ commute. Crucially, the categorical coherence enjoyed by a symmetric monoidal product on $\s{C}$ gives rise to the homotopy coherence required of an $E_\infty$ structure on $B\s{C}$.

In this paper, we study the equivariant analogue to this situation. Let $G$ be a finite group. If $\s{C}$ is a category with a $G$-action, then $B\s{C}$ is a space with a $G$-action by functoriality. That being said, the story becomes more complicated once we introduce algebraic structure, because there are multiple notions of equivariant homotopy commutativity. By way of motivation, suppose $(A,*)$ is a commutative monoid equipped with a $G$-action, and consider the fixed points of $A$. Then for any subgroup $H \subset G$, the fixed set $A^H$ inherits a commutative monoid structure from $A$.  More interestingly, for any subgroups $K \subset H$, there is a wrong-way ``norm map'' $\t{n}_K^H : A^K \to A^H$, which sends $x \in A^K$ to the product $(r_1 x) * \cdots * (r_n x)$, where $r_1,\dots,r_n$ are a set of $H/K$ coset representatives. That $\t{n}_K^H(x)$ is $H$-fixed follows from the strict commutativity of the product $*$.

Now suppose that the product $*$ is only homotopy commutative. In this case, norm maps $A^K \to A^H$ can and often do exist, but they are additional structure. Accordingly, there are many different flavors of equivariant homotopy commutativity -- not only must one specify a product, but also which norms are present. The situation can be made precise using Blumberg and Hill's theory of $N_\infty$ algebras \cite{BH}. Roughly speaking, a $N_\infty$ algebra is an equivariant space, which is equipped with a homotopy coherent commutative monoid structure, together with a compatible system of norm maps.

Given the connection between $E_\infty$ spaces and symmetric monoidal categories, it is natural to ask what the categorical analogue to a $N_\infty$ space is. In what follows, we introduce an equivariant generalization of ordinary symmetric monoidal categories, which we call \emph{normed symmetric monoidal categories} (NSMCs). These are symmetric monoidal categories equipped with compatible families of norm maps. We shall prove a coherence theorem for NSMCs (Theorem \ref{thm:coh}), which informally states that every diagram built from certain basic isomorphisms in a NSMC commutes. In addition to its purely algebraic consequences, this theorem implies that the classifying space of a NSMC is a $N_\infty$ space (Theorem \ref{thm:NSMCmodel}).

As usual, making a coherence theorem precise requires work, because one must restrict attention to a certain class of formally definable diagrams. We shall use a blend of first-order logical and operadic formalism. Given any set $\c{T}$ of norm maps, we consider a certain categorical operad $\c{SM}_{\c{T}}$, whose objects are formal composites of the ordinary product and the norms in $\c{T}$, and which has a unique isomorphism between any two composites of the same arity. We then prove that $\c{T}$-normed symmetric monoidal categories are equivalent to $\c{SM}_{\c{T}}$-algebras. More precisely, we generalize and recast Mac Lane's coherence theorem as follows.

\begin{thm1} The $2$-category of all $\c{T}$-NSMCs, lax $\c{T}$-monoidal functors, and $\c{T}$-transformations is isomorphic to the $2$-category of all $\c{SM}_{\c{T}}$-algebras, lax $\c{SM}_{\c{T}}$-morphisms, and $\c{SM}_{\c{T}}$-transformations. Similarly for strong and strict morphisms.
\end{thm1}

When $G$ is trivial and $\c{T}$ is empty, this is an operadic reinterpretation of Mac Lane's classical work, but it is a new result in all other cases. We generalize Theorem \ref{thm:coh} to include NSMCs with strict relations in Theorem \ref{thm:cohR}.

The approach to coherence in Theorem \ref{thm:coh} has several benefits. To start, its formulation as an isomorphism of $2$-categories allows for simultaneous consideration of coherence for categories, functors, and natural transformations. Next, the description of $\c{SM}_{\c{T}}$ in terms of formal composite operations links our work to Mac Lane's classical coherence theory. This allows us to use Mac Lane's results and to adapt his proofs in \cite{MacLane} and \cite{CWM}. Finally, the operadic description of coherence makes for a smooth transition to topology. As observed by May \cite{MayPerm}, the classifying space functor $B$ preserves operad actions, and with a little bit of work, the next result follows.

\begin{thm2}  Let $\s{C}$ be a $\c{T}$-NSMC. Then the classifying space $B\s{C}$ is an algebra over the $N_\infty$ operad $B\c{SM}_{\c{T}}$. {Moreover, the class of admissible sets of $B\c{SM}_{\c{T}}$ is the indexing system\footnote{This is a combinatorial object that encodes the norms parametrized by a $N_\infty$ operad.} generated by $\c{T}$.}
\end{thm2}

On the other hand, our theory of NSMCs also has applications in equivariant category theory, and it partially bridges the gap between the frameworks developed by Guillou--May--Merling--Osorno \cite{GM}, \cite{GMM}, \cite{GMMO} and Hill--Hopkins \cite{HH}. These are two distinct approaches to $G$-equivariant symmetric monoidal structure, with two distinct aims. Guillou--May--Merling--Osorno consider (pseudo)algebras over an operad $\s{P}_G$, which is a $G$-equivariantization of the categorical Barratt--Eccles operad. These equivariant symmetric monoidal categories are regarded as models for $E_\infty$ $G$-spaces, and they are input to the authors' infinite loop space machinery. On the other hand, Hill and Hopkins consider $G$-coefficient systems of symmetric monoidal categories, which are equipped with operations analogous to the indexed monoidal products of \cite{HHR}. These equivariant symmetric monoidal categories provide a language for describing phenomena related to the transfer.

A priori, we see no reason why Guillou--May--Merling--Osorno's and Hill--Hopkins' categories should be related in any compelling way, but here is where NSMCs come in. Suppose $\s{C}$ is a strictly associative and unital symmetric monoidal category, and that $J$ is a small right $G$-category. Then the category $\b{Fun}(J,\s{C})$ of all functors from $J$ to $\s{C}$ is a NSMC (cf. \S\ref{sec:FunNSMC}). Now specialize to the case where $J = \bb{T}G$ is the translation category of $G$, i.e. the category whose object set is $G$ and which has a unique morphism between any pair of objects. Then the category $\b{Fun}(\bb{T}G,\s{C})$ is a prototypical example of a $\s{P}_G$-algebra, and we can identify its fixed points.

\begin{thm3} Let $\s{C}$ be an ordinary symmetric monoidal category and let $\bb{T}G$ be the translation category of $G$. Then the fixed points of $\b{Fun}(\bb{T}G,\s{C})$ are equivalent to the categorical Mackey functor $\underline{\s{C}}^{(-)}$, whose value at $G/H$ is the category of $H$-actions in $\s{C}$, and whose transfers are Hill--Hopkins--Ravenel norms.
\end{thm3}

Composing the restrictions and transfers of $\underline{\s{C}}^{(-)}$ yields a prototypical example of a $G$-symmetric monoidal category in the sense of Hill and Hopkins. In this way, NSMCs relate a standard class of $\s{P}_G$-algebras to a standard class of $G$-symmetric monoidal categories. Moreover, Theorem \ref{thm:coh} describes the coherence enjoyed by the category $\b{Fun}(\bb{T}G,\s{C})$, and by extension, the coherence of the categorical Mackey functor $\ul{\s{C}}^{(-)}$ and its associated $G$-symmetric monoidal category.

Thus, we take two perspectives in this paper. One is purely categorical: we introduce an equivariant generalization of symmetric monoidal structure, which in some cases interpolates between Guillou--May--Merling--Osorno's and Hill--Hopkins' equivariant categories, and we prove a version of the Mac Lane coherence theorem for our structures. The vast majority of this paper is written from this point of view. The second perspective is homotopical. By way of our coherence theorem, we deduce that the classifying space of a NSMC is a $N_\infty$ space, and thus obtain a means of constructing $N_\infty$ spaces from a small amount of categorical data.

\subsection*{Organization} In \S\ref{ch:NSMC,sec:defn}, we define NSMCs and the $2$-categories that they assemble into, and then we analyze functor NSMCs in \S\ref{sec:FunNSMC}. In \S\ref{sec:opalg2cats}, we define algebras over categorical operads and the $2$-categories that they assemble into. In \S\ref{ch:NSMC,sec:cohthm}, we state the coherence theorem for NSMCs, and in \S\ref{sec:pfcoh} we prove it. We conclude by analyzing NSMCs with strict relations in \S\ref{sec:TRNSMC}.

\subsection*{Conventions} Throughout this paper, $G$ denotes a finite group, and Roman letters such as $K$ and $H$ denote subgroups of $G$. Roman letters around $T$ are actions by subgroups of $G$. Script letters around $\s{C}$ and $\s{D}$ denote categories, and we shall almost always write $\otimes$ for symmetric monoidal products. Script letters around $\s{O}$ and $\s{P}$ denote operads, and all operads are understood to be symmetric. 

\subsection*{Acknowledgements} We want to thank Peter May, Mike Hill, Bert Guillou, Luis Pereira, and Peter Bonventre for many helpful conversations and for their comments on this paper and its previous incarnations. This work was partially supported by NSF grant DMS-1803426 (to the author) and NSF grant 1553653 (to Maryanthe Malliaris).

\section{$2$-categories of normed symmetric monoidal categories}\label{ch:NSMC,sec:defn}

\subsection{Preliminaries on $G$-categories} Fix a finite group $G$. A \emph{$G$-category} is a category $\s{C}$, equipped with an action of $G$ through functors $g \cdot (-) : \s{C} \to \s{C}$. We assume that this action is strict, i.e. that $e \cdot (-) = \t{id}$ and $[h \cdot (-)] \circ [g \cdot (-)] = (hg) \cdot (-)$. If $\s{C}$ and $\s{D}$ are $G$-categories, then a $G$-functor $F : \s{C} \to \s{D}$ is a functor such that $gF(-) = Fg(-)$ for all $g \in G$. If $F,H : \s{C} \rightrightarrows \s{D}$ are $G$-functors, then a \emph{$G$-natural transformation} $\eta : F \Rightarrow H$ is a natural transformation such that $g\eta_{(-)} = \eta_{g(-)}$ for all $g \in G$. Let $\ub{Cat}^G$ denote the $2$-category of all small $G$-categories, $G$-functors, and $G$-natural transformations, and let $\b{Cat}^G$ denote its underlying $1$-category.

The $1$-category $\b{Cat}^G$ is cartesian closed. For any $G$-categories $\s{C}$ and $\s{D}$, the internal hom object $\b{Cat}_G(\s{C},\s{D}) \in \b{Cat}^G$ is the category $\b{Fun}(\s{C},\s{D})$ of \emph{all} (possibly nonequivariant) functors $\s{C} \to \s{D}$, together with all natural transformations between them. The group $G$ acts by conjugation, and the $G$-fixed subcategory of $\b{Cat}_G(\s{C},\s{D})$ is the $1$-category $\ub{Cat}^G(\s{C},\s{D})$ of all $G$-functors $\s{C} \to \s{D}$ and $G$-natural transformations between them.

The adjunction $(-) \times \s{C} \dashv \b{Cat}_G(\s{C},-)$ makes the hom categories $\b{Cat}_G(\s{C},\s{D})$ natural and ubiquitous, especially for our purposes. Consequently, we will be confronted with nonequivariant functors and natural transformations throughout this discussion, even though we are ultimately interested in $G$-equivariant symmetric monoidal categories.

\subsection{Normed symmetric monoidal categories} In this section, we introduce NSMCs. We begin by considering the most direct $G$-equivariant generalization of symmetric monoidal structure.

\begin{defn}\label{defn:symmmonobj} A \emph{symmetric monoidal object in $\b{Cat}^G$} is a tuple $(\s{C},\tn,e,\alpha,\lambda,\rho,\beta)$ such that $\s{C}$ is a small $G$-category, $\tn : \s{C}^{\times 2} \to \s{C}$ is a $G$-bifunctor, $e \in \s{C}$ is a $G$-fixed object, and
	\[
	(C \tn D) \tn E \stackrel{\alpha_{C,D,E}}{\to} C \tn (D \tn E)	,\quad	e \tn C \stackrel{\lambda_C}{\to} C		,\quad	C \tn e \stackrel{\rho_C}{\to} C	,\quad	C \tn D \stackrel{\beta_{C,D}}{\to} D \tn C
	\]
are $G$-natural isomorphisms that make the usual associativity pentagon, braid hexagon, and triangle diagrams commute (cf. \cite{Kelly}). Define the \emph{standard $n$-fold tensor products} on $\s{C}$ by $\btn_0() := e$, $\btn_1(C) = C$, and $\btn_{n+1}(C_1,\dots,C_{n+1}) = \btn_n(C_1,\dots,C_n) \tn C_{n+1}$.
\end{defn}

These data make the fixed point subcategories $\s{C}^H$ into ordinary symmetric monoidal categories, but in order to get transfer maps, we must overlay certain ``twisted equivariant'' tensor products. These twisted tensor products generalize and lift the usual norms $\t{n}_K^H : \s{C}^K \to \s{C}^H$ to operations defined on all of $\s{C}$ (cf. Definition \ref{defn:internalnorm}), so we refer to them as external norms.

\begin{defn}\label{def:extnorm} Suppose that $H \subset G$ is a subgroup and that $T$ is a finite $H$-set equipped with a linear order $T \cong \{1,\dots,|T|\}$. Let $\sigma : H \to \Sigma_{|T|}$ denote the corresponding permutation representation on $\{1,\dots,|T|\}$. For any $G$-category $\s{C}$, we define $\s{C}^{\times T}$ to be the $|T|$-fold cartesian power $\s{C}^{\times |T|}$ equipped with the $H$-action $h(C_1,\dots,C_{|T|}) = (hC_{\sigma(h)^{-1}1} , \dots , hC_{\sigma(h)^{-1}|T|})$, and similarly for morphisms. We define an \emph{external $T$-norm} on $\s{C}$ to be an $H$-functor $\s{C}^{\times T} \to \s{C}$.
\end{defn}

A normed symmetric monoidal category is a symmetric monoidal object equipped with a compatible family of external norms.

\begin{defn}\label{defn:NSMC} A \emph{set of exponents} is an indexed set $\c{T} = (T_i)_{i \in I}$ such that for each $i \in I$, the element $T_i$ is a finite, ordered $H_i$-set for some subgroup $H_i \subset G$ depending on $i$.

A \emph{$\c{T}$-normed symmetric monoidal category} ($\c{T}$-NSMC) is a symmetric monoidal object $(\s{C},\tn,e,\alpha,\lambda,\rho,\beta)$ in $\b{Cat}^G$, together with
	\begin{enumerate}
		\item{}an external $T_i$-norm $\btn_{T_i} : \s{C}^{\times T_i} \to \s{C}$ for every $i \in I$, and
		\item{}(\emph{untwistors}) a possibly nonequivariant natural isomorphism
		\[
		\ups_{T_i} : \sbtn_{T_i} (C_1,\dots,C_{|T_i|}) \to \sbtn_{|T_i|}(C_1,\dots,C_{|T_i|})
		\]
		for every $i \in I$, such that for every $h \in H_i$, the ``twisted equivariance'' diagram
		\[
		\begin{tikzpicture}
			\node(A) at (0,0) {$h\sbtn_{T_i}(C_1,\dots,C_{|T_i|})$};
			\node(B) at (5,0) {$\sbtn_{T_i}(hC_{\sigma(h)^{-1}1},\dots,hC_{\sigma(h)^{-1}|T_i|})$};
			\node(C) at (5,-2) {$\sbtn_{|T_i|}(hC_{\sigma(h)^{-1}1},\dots,hC_{\sigma(h)^{-1}|T_i|})$};
			\node(D) at (0,-4) {$h\sbtn_{|T_i|}(C_1,\dots,C_{|T_i|})$};
			\node(E) at (5,-4) {$\sbtn_{|T_i|}(hC_1,\dots,hC_{|T_i|})$};
			\path[->]
			(A) edge [above] node {$\id$} (B)
			(A) edge [left] node {$h\ups_{T_i}$} (D)
			(B) edge [right] node {$\ups_{T_i}$} (C)
			(C) edge [right] node {$\sigma(h)^{-1}$} (E)
			(D) edge [below] node {$\id$} (E)
			;
		\end{tikzpicture}
		\]
		commutes. Here the morphism $\sigma(h)^{-1}$ denotes the canonical isomorphism for the symmetric monoidal category $\s{C}$, which permutes the factors of $\btn_{|T_i|}$ by $\sigma(h)^{-1}$.
	\end{enumerate}
\end{defn}

\begin{nota} We shall write $\tn^{\s{C}}$, $\alpha^{\s{C}}$, $\lambda^{\s{C}}$, etc. when we wish to emphasize that these data are associated to a particular NSMC $\s{C}$.
\end{nota}

Just as categories are the objects of a $2$-category, so too are symmetric monoidal categories and NSMCs. We now generalize the usual notions of (lax, strong, and strict) monoidal functors and monoidal natural transformations to the normed symmetric monoidal setting.

\begin{defn}\label{defn:laxnormedfunctor} Suppose that $\s{C}$ and $\s{D}$ are $\c{T}$-normed symmetric monoidal categories. A \emph{lax $\c{T}$-normed functor} $(F,F_\bullet) : \s{C} \to \s{D}$ consists of the following data:
	\begin{enumerate}
		\item{}a $G$-functor $F : \s{C} \to \s{D}$,
		\item{}a $G$-fixed morphism $F_e : e^{\s{D}} \to Fe^{\s{C}}$,
		\item{}a $G$-natural transformation $F_\tn : FC \tn^{\s{D}} FC' \to F(C \tn^{\s{C}} C')$, and
		\item{}for every $i \in I$, a $H_i$-natural transformation\footnote{Regard both sides as $H_i$-functors $\s{C}^{\times T_i} \to \s{D}$.} 
			\[
			F_{\btn_{T_i}} : \sbtn^{\s{D}}_{T_i}(FC_1 , \dots , FC_{|T_i|}) \to F\big(\sbtn^{\s{C}}_{T_i}(C_1 , \dots , C_{|T_i|}) \big),
			\]
	\end{enumerate}
such that the usual lax symmetric monoidal diagrams relating $\alpha$, $\lambda$, $\rho$, and $\beta$ to the comparison maps $F_e$ and $F_\tn$ commute (cf. \cite[Ch. XI.2]{CWM}), and the square
	\[
	\begin{tikzpicture}[scale=0.85]
	\node(ul) {$\btn^{\s{D}}_{T_i}(FC_1 , \dots , FC_{|T_i|})$};
	\node(ur) at (6.5,0) {$\btn^{\s{D}}_{|T_i|}(FC_1 , \dots , FC_{|T_i|})$};
	\node(ll) at (0,-3) {$F\Big( \btn^{\s{C}}_{T_i}(C_1 , \dots , C_{|T_i|}) \Big)$};
	\node(lr) at (6.5,-3) {$F\Big( \btn^{\s{C}}_{|T_i|}(C_1 , \dots , C_{|T_i|}) \Big)$};
	\path[->]
	(ul) edge [left] node {$F_{\btn_{T_i}}$} (ll)
	(ur) edge [right] node {$\,\, \Big( \t{iterated $F_{\tn}$'s and $\t{id}$'s} \Big) =: F_{\btn_{|T_i|}}$} (lr)
	(ul) edge [above] node {$\ups^{\s{D}}_{T_i}$} (ur)
	(ll) edge [below] node {$F\ups^{\s{C}}_{T_i}$} (lr)
	;
	\end{tikzpicture}
	\]
commutes for every $i \in I$. More precisely, the right hand map $F_{\btn_{|T_i|}}$ is given by $F_{\btn_0} := F_e$ and $F_{\btn_1} := \t{id}_F$ for $n=0,1$, and $F_{\btn_{n+1}} := F_\tn \circ (F_{\btn_n} \tn^{\s{D}} \id)$ for $n > 1$. We say that a lax $\c{T}$-normed morphism is \emph{strong} (resp. \emph{strict}) if the natural transformations $F_e$, $F_\tn$, and $F_{\btn_{T_i}}$ are all isomorphisms (resp. identities).
\end{defn}

\begin{defn}\label{defn:Ntransfm} Suppose that $\s{C}$ and $\s{D}$ are $\c{T}$-normed symmetric monoidal, and that $(F,F_\bullet) , \, (F',F'_\bullet) : \s{C} \rightrightarrows \s{D}$ is a pair of lax $\c{T}$-normed functors between them. An \emph{$\c{T}$-normed monoidal transformation $\omega : (F,F_\bullet) \Rightarrow (F',F'_\bullet)$} is a $G$-natural transformation $\omega : F \Rightarrow F'$ such that the usual monoidal transformation squares relating $F_e$, $F'_e$, $F_\tn$, and $F'_\tn$ to $\omega$ commute (cf. \cite[Ch. XI.2]{CWM}), and the square
	\[
	\begin{tikzpicture}
		\node(ul) {$\btn^{\s{D}}_{T_i}(FC_1 , \dots , FC_{|T_i|})$};
		\node(ur) at (7,0) {$\btn^{\s{D}}_{T_i} (F'C_1 , \dots , F'C_{|T_i|})$};
		\node(ll) at (0,-2) {$F\Big( \btn^{\s{C}}_{T_i} (C_1 , \dots , C_{|T_i|}) \Big)$};
		\node(lr) at (7,-2) {$F'\Big( \btn^{\s{C}}_{T_i} (C_1, \dots , C_{|T_i|}) \Big)$};
		\path[->]
		(ul) edge [left] node {$F_{\btn_{T_i}}$} (ll)
		(ur) edge [right] node {$F'_{\btn_{T_i}}$} (lr)
		(ul) edge [above] node {$\btn^{\s{D}}_{T} (\omega, \dots , \omega)$} (ur)
		(ll) edge [below] node {$\omega$} (lr)
		;
	\end{tikzpicture}
	\]
commutes for every $i \in I$.
\end{defn}

The $2$-category structure on $\ub{Cat}^G$ lifts to normed symmetric monoidal categories. The composite of lax maps $(H,H_\bullet) \circ (F,F_\bullet)$ is obtained by composing underlying functors and comparison data, e.g.
	\[
	(H \circ F)_{\otimes} = HF_\tn \circ H_\tn : HFC \tn HFC' \to H(FC \tn FC') \to HF(C \tn C').
	\]
Vertical and horizontal composites of transformations are computed in $\ub{Cat}^G$, and identities are also inherited from $\ub{Cat}^G$. 

\begin{nota}Let
	\[
	\c{T}\b{SMLax} = \t{the 2-category of $\c{T}$-NSMCs and lax monoidal functors} .
	\]
There are sub-$2$-categories 
	\[
	\c{T}\b{SMSt} \subset \c{T}\b{SMStg} \subset \c{T}\b{SMLax}
	\]
of strong and strict maps, and there is a forgetful $2$-functor $\c{T}\b{SMLax} \to \ub{Cat}^G$.
\end{nota}

\section{Functor NSMCs}\label{sec:FunNSMC}

With Definition \ref{defn:NSMC} in tow, we now consider an important class of examples. Suppose that $\s{C}$ is a symmetric monoidal object in $\b{Cat}^G$ (cf. Definition \ref{defn:symmmonobj}) and that $J$ is a small, right $G$-category. Consider the functor category $\b{Fun}(J,\s{C})$ of all $J$-diagrams in $\s{C}$. We shall explain how to make $\b{Fun}(J,\s{C})$ into a NSMC, and then analyze the special case where $J$ is the translation category of the group $G$.

\subsection{The general case}\label{subsec:FunNSMCgen} Keep $J$ and $\s{C}$ as above. To start, the category $\b{Fun}(J,\s{C})$ inherits a left $G$-action from $\s{C}$ and $J$ by composition:
	\[
	(g \cdot F)(-) = gF((-)g)	\quad\t{and}\quad	(g \cdot \eta)_{(-)} = g\eta_{(-)g} .
	\]
The category $\b{Fun}(J,\s{C})$ also inherits a levelwise symmetric monoidal structure from $\s{C}$. Given functors $F,H : J \rightrightarrows \s{C}$, let
	\[
	(F \otimes H)(j) = Fj \otimes Hj	\quad\t{and}\quad	(F \otimes H)(f : j \to j') = Ff \otimes Hf ,
	\]
and given natural transformations $\eta : F \Rightarrow F'$ and $\vartheta : H \Rightarrow H'$, let
	\[
	(\eta \otimes \vartheta)_j = \eta_j \otimes \vartheta_j : Fj \otimes Hj \to F' j \otimes H' j .
	\]
This defines a functor $\otimes : \b{Fun}(J,\s{C})^{\times 2} \to \b{Fun}(J,\s{C})$.

Let $e : J \to \s{C}$ be the constant functor valued at $e$.

For any $F,H,K : J \to \s{C}$, let $\alpha_{F,H,K} : (F \otimes H) \otimes K \to F \otimes (H \otimes K)$ be the natural isomorphism with $j$-component
	\[
	(\alpha_{F,H,K})_j = \alpha_{Fj,Hj,Kj} : (Fj \otimes Hj) \otimes Kj \to Fj \otimes (Hj \otimes Kj).
	\]
Then $\alpha$ is also natural in $F$, $H$, and $K$. Similarly, define natural isomorphisms $\lambda_F : e \otimes F \to F$, $\rho_F : F \otimes e \to F$, and $\beta_{F,H} : F \otimes H \to H \otimes F$ by
	\[
	(\lambda_F)_j = \lambda_{Fj}	\quad\t{and}\quad	(\rho_F)_j = \rho_{Fj}	\quad\t{and}\quad	(\beta_{F,H})_j = \beta_{Fj,Hj} .
	\]
These, too, are natural in $F$ and $H$.

Straightforward checks reveal that $\otimes$, $\alpha$, $\lambda$, $\rho$, and $\beta$ are $G$-equivariant, and that the coherence diagrams for a symmetric monoidal structure all commute. Therefore $\b{Fun}(J,\s{C})$ is a symmetric monoidal object in $\b{Cat}^G$. Note also that for every object $j \in J$, the (nonequivariant) evaluation functor $\t{ev}_j : \b{Fun}(J,\s{C}) \to \s{C}$ is strict symmetric monoidal.

We now consider external norms. As one might expect, the external norms of $\b{Fun}(J,\s{C})$ depend on the index category $J$. In general, the symmetric monoidal structure on $\b{Fun}(J,\s{C})$ extends to include an external $T$-norm $\btn_T$ and the corresponding untwistor $\ups_T$ if and only if
	\[
	\bigcup_{j \in J} \t{Stab}_H(j) \subset  \bigcap_{t \in T} \t{Stab}_H(t) = \Big\{ h \in H \, \Big| \, h \cdot (-) = \t{id} : T \to T \Big\},
	\]
i.e. if every $h \in H$ that stabilizes a single object of $J$ fixes all of $T$. We shall momentarily construct norms for all such $T$ (Construction \ref{const:TGCnorms}), and we shall also show what goes wrong when $T$ does not have this property (Nonexample \ref{nonex:FunJCnorm}).

\begin{const}\label{const:TGCnorms} Suppose that $H \subset G$ is a subgroup, and that $T$ is an ordered, finite $H$-set. Let the subgroup 
	\[
	\Gamma(T) = \{(h,\sigma(h)) \, | \, h \in H\} \subset G \times \Sigma_{|T|}
	\]
be the graph of the corresponding permutation representation $\sigma : H \to \Sigma_{\abs{T}}$ on $\{1, \dots , |T|\}$, so that $h \cdot i = \sigma(h)(i)$ for all $i \in \{1 , \dots , |T|\}$.

Now assume that $\bigcup_{j \in J} \t{Stab}_H(j) \subset \bigcap_{t \in T} \t{Stab}_H(t) = \t{ker}(\sigma)$, and choose a set of $H$-orbit representatives $\{j_a \, | \, a \in A\}$ for $\t{Ob}(J)$, considered as a right $H$-set.

\begin{enumerate}
		\item{}The norm map $\btn_{T} : \b{Fun}(J,\s{C})^{\times T} \to \b{Fun}(J,\s{C})$ is defined as follows.
			\begin{enumerate} 
				\item{}For an object $(C^1_\bullet , \dots , C^{|T|}_\bullet) \in \b{Fun}(J,\s{C})^{\times T}$ and $j = j_a h \in J$,
			\[
			\Big[\sbtn_T (C^1 , \dots , C^{|T|})\Big]_j := \sbtn_{|T|}(C^{\sigma(h)^{-1}1}_j , \dots , C^{\sigma(h)^{-1}|T|}_j),
			\]
		and for $f : j \to j'$, where $j = j_a h$ and $j' = j_b h'$,
			\[
			\Big[\sbtn_{T}(C^1 , \dots , C^{|T|})\Big]_{f} := \sigma(h') \circ \sigma(h)^{-1} \circ \sbtn_{|T|}(C^{\sigma(h)^{-1}1}_{f} , \dots , C^{\sigma(h)^{-1}|T|}_{f}).
			\]
			Here, the leading $\sigma(-)$'s are the symmetric monoidal coherence maps for $\s{C}$ that permute the factors of the tensor product by $\sigma(-)$. Note that the permutation $\sigma(h)$ is independent of the expression $j = j_a h$ because $\bigcup_{j \in J} \t{Stab}_H(j) \subset \t{ker}(\sigma)$.
			
				\item{}For a morphism $(f^1_\bullet , \dots f^{|T|}_\bullet) : (C^1_\bullet , \dots , C^{|T|}_\bullet ) \to (D^1_\bullet , \dots , D^{|T|}_\bullet)$ and an object $j = j_a h \in J$,
				\[
				\Big[\sbtn_{T}(f^1 , \dots , f^{|T|})\Big]_j := \sbtn_{|T|}( f^{\sigma(h)^{-1}1}_j , \dots , f^{\sigma(h)^{-1}|T|}_j).
				\]
			\end{enumerate}
		\item{}The untwistor $\upsilon = \upsilon_{T} : \btn_{T} \Rightarrow \btn_{|T|}$ is defined by
			\[
			\!\!\!\! (\upsilon_{C^1 , \dots , C^{|T|}})_j := \sigma(h)^{-1} : \sbtn_{|T|}(C^{\sigma(h)^{-1}1}_j , \dots , C^{\sigma(h)^{-1}|T|}_j) \to \sbtn_{|T|}(C^{1}_j , \dots , C^{|T|}_j),
			\]
		where $j = j_a h$, and $\sigma(h)^{-1}$ denotes the symmetric monoidal coherence map for $\s{C}$ that permutes the factors of $\btn_{|T|}$ by $\sigma(h)^{-1}$.
	\end{enumerate}

Unwinding the definitions shows that $\bigotimes_T$ is an external $T$-norm on $\b{Fun}(J,\s{C})$, and that $\ups_T$ is an untwistor for it. The required conditions all boil down to naturality and the coherence theorem for symmetric monoidal categories.\end{const}

On the other hand, if the inclusion $\bigcup_{j \in J} \t{Stab}_H(j) \subset \bigcap_{t \in T} \t{Stab}_H(t)$ does not hold, then one can produce examples of categories $\b{Fun}(J,\s{C})$ that do not support an external $T$-norm. We give one below.

\begin{nonex}\label{nonex:FunJCnorm} Let $J$ be a right $G$-category, let $H \subset G$ be a subgroup of $G$, and let $T$ be a finite $H$-action. Suppose that there is some $h_0 \in H$ and $j_0 \in J$ such that $j_0 \cdot h_0 = j_0$, but $h_0 \cdot (-) : T \to T$ is not the identity map. We shall show that the levelwise symmetric monoidal structure on $\b{Fun}(J,\s{C})$ does not generally extend to include a compatible $T$-norm and untwistor.

Let $\s{C} = (\b{Set} , \sqcup , \varnothing)$, and give $\s{C}$ a trivial $G$-action. Suppose for contradiction that we had a $T$-norm $\bigsqcup_T : \b{Fun}(J,\b{Set})^{\times T} \to \b{Fun}(J,\b{Set})$ and untwistor $\ups = \ups_T : \bigsqcup_T \Rightarrow \bigsqcup_{|T|}$. We consider $T$-fold coproducts of the terminal object $* : J \to \b{Set}$. The twisted equivariance diagram for $h_0$ is
	\[
	\begin{tikzpicture}
		\node(A) at (0,0) {$h_0 \cdot \bsq_T(*, \cdots , *)$};
		\node(B) at (4,0) {$\bsq_T(*, \cdots , *)$};
		\node(C) at (0,-4) {$h_0 \cdot \bsq_{|T|} (*,\cdots ,*)$};
		\node(D) at (4,-2) {$\bsq_{|T|}(*, \cdots , *)$};
		\node(E) at (4,-4) {$\bsq_{|T|}(*, \cdots , *)$};
		\path[->]
		(A) edge [above] node {$\t{id}$} (B)
		(A) edge [left] node {$h_0 \cdot \ups_{*,\cdots,*}$} (C)
		(B) edge [right] node {$\ups_{*,\cdots,*}$} (D)
		(D) edge [right] node {$\sigma(h_0)^{-1}_{*,\cdots,*}$} (E)
		(C) edge [below] node {$\t{id}$} (E)
		;
	\end{tikzpicture}
	\]
and evaluating at $j_0$ yields the equation
	\[
	(\ups_{*,\cdots,*})_{j_0} = (\ups_{*,\cdots,*})_{j_0 \cdot h_0} = (h_0 \cdot  \ups_{*,\cdots,*})_{j_0} = (\sigma(h_0)^{-1}_{*,\cdots,*})_{j_0} \circ (\ups_{*,\cdots,*})_{j_0} .
	\]
Since $\ups$ is an isomorphism, we must have that $(\sigma(h_0)^{-1}_{*,\cdots,*})_{j_0} = \t{id}$, but this is false because $(\sigma(h_0)^{-1}_{*,\cdots,*})_{j_0}$ is isomorphic to the permutation $h_0^{-1} \cdot (-) : T \to T$.
\end{nonex}

Thus, we conclude that $\b{Fun}(J,\s{C})$ supports external $T$-norms for those $T$ such that $\bigcup_{j \in J} \t{Stab}_H(j) \subset \bigcap_{t \in T} \t{Stab}_H(t)$, but no more in general.

\subsection{A special case}\label{ch:NSMC,sec:FunTGC}

Let $G$ be a finite group. The {translation category} of $G$ is the category whose objects are the elements of $G$, and which has a unique morphism $x \to y$ for any $x,y \in G$. We write $\bb{T} G$ for this category, and note that right multiplication in $G$ induces a right $G$-action on $\bb{T}G$. 

In this section, we study the functor category $\b{Fun}(\bb{T}G,\s{C})$, where $\s{C}$ is an ordinary symmetric monoidal category equipped with a trivial $G$-action. Such functor categories are  prototypical input to the infinite loop space machinery of Guillou--May--Merling--Osorno \cite{GM}, \cite{GMM}, \cite{GMMO}, and in what follows, we shall describe a surprising connection between their theory and the $G$-symmetric monoidal categories introduced by Hill and Hopkins \cite{HH}. Inspecting the definitions reveals that the norm from the $K$-fixed points of $\b{Fun}(\bb{T}G,\s{C})$ to its $H$-fixed points can be identified with the Hill-Hopkins-Ravenel norm $N_K^H : \b{Fun}(BK,\s{C}) \to \b{Fun}(BH,\s{C})$ (cf. Proposition \ref{prop:TGCnorms}), but we see no reason why this should be true from first principles. This discussion may be regarded as an elaboration on \cite[\S 3.2]{HH}.

\subsubsection{The NSMC $\b{Fun}(\bb{T}G,\s{C})$} We begin by establishing notation.

\begin{defn}\label{defn:transcat} The \emph{translation category} of a left $G$-set $X$ is the groupoid $\bb{T}X$ whose object set is $X$, and whose hom sets are $\bb{T}X(x,y) = \{g \in G \, | \, gx = y\}$. Composition is by group multiplication, and the unit $e \in G$ gives the identities. There is a functor $\bb{T} : \b{Set}^G \to \b{Cat}$ that sends a $G$-set $X$ to $\bb{T}X$, and sends a $G$-map $f : X \to Y$ to the functor $\bb{T}f : \bb{T}X \to \bb{T}Y$ defined by the formula $\bb{T}f(x) = f(x)$ on objects and $\bb{T}f(g : x \to y) = g : f(x) \to f(y)$ on morphisms. We shall sometimes write $\bb{T} = \bb{T}_G$ to emphasize that we are taking the translation category of a $G$-set.
\end{defn}

\begin{ex}The group $G$ acts on itself by left and right multiplication, and these actions interchange. Thus, we may regard $G$ asymmetrically as a left $G$-set equipped with a right $G$-action. Applying the functor $\bb{T}$ makes $\bb{T}G$ into a right $G$-category, with action maps
	\[
	(-) \cdot g := \bb{T}( (-) \cdot g) : \bb{T}G \to \bb{T}G.
	\]

Since $G$ is a transitive, free left $G$-set, it follows that for every $x,y \in \bb{T}G$, there is a unique morphism $! = yx^{-1} : x \to y$, and it is an isomorphism. For any $g \in G$, the functor $(-) \cdot g : \bb{T}G \to \bb{T}G$ sends the object $x \in \bb{T}G$ to $xg$ and the morphism $yx^{-1} : x \to y$ to $yx^{-1} : xg \to yg$.
\end{ex}

Now suppose that $\s{C}$ is an ordinary symmetric monoidal category, and regard it as a symmetric monoidal object in $\b{Cat}^G$ with a trivial $G$-action. Taking $J = \bb{T}G$ and specializing the constructions in \S\ref{subsec:FunNSMCgen} yields a NSMC $\b{Fun}(\bb{T}G,\s{C})$. Since $G$ acts trivially on $\s{C}$, the $G$-action on $\b{Fun}(\bb{T}G,\s{C})$ is given by
	\[
	g \cdot C_\bullet = C_\bullet \circ \bb{T}((-) \cdot g)	\quad\t{and}\quad	g \cdot \eta_\bullet = \eta_\bullet \circ \bb{T}((-) \cdot g)
	\]
on objects $C_\bullet$ and morphisms $\eta_\bullet$. Therefore $(g \cdot C)_\bullet = C_{\bullet g}$ and $(g  \cdot  \eta)_\bullet = \eta_{\bullet g}$. As before, the ordinary symmetric monoidal structure on $\b{Fun}(\bb{T}G,\s{C})$ is levelwise, and for any subgroup $H \subset G$ and finite, ordered $H$-set $T$, we can construct an external $T$-norm $\bigotimes_T : \b{Fun}(\bb{T}G,\s{C})^{\times T} \to \b{Fun}(\bb{T}G,\s{C})$ using the recipe in Construction \ref{const:TGCnorms}. Indeed, the $G$-action on $G = \t{Ob}(\bb{T}G)$ is free, so the only element $g \in G$ that fixes an object of $\bb{T}G$ is the identity. Thus $\b{Fun}(\bb{T}G,\s{C})$ is a NSMC with all norms.

\begin{rem} More generally, if $\s{C}$ is a symmetric monoidal object in $\b{Cat}^G$ (cf. Definition \ref{defn:symmmonobj}), then the $G$-action on the objects of $\b{Fun}(\bb{T}G,\s{C})$ becomes $g \cdot C_\bullet = g \cdot (-) \circ C_\bullet \circ \bb{T}((-) \cdot g)$, and similarly for morphisms. We have a normed symmetric monoidal structure exactly as above, but our analysis in \S\S\ref{subsubsec:FunTGCfp}--\ref{subsubsec:normFunTGC} below will break down.
\end{rem}

\subsubsection{The fixed points of $\b{Fun}(\bb{T}G,\s{C})$}\label{subsubsec:FunTGCfp} Next, we identify the $H$-fixed subcategory of $\b{Fun}(\bb{T}G,\s{C})$ with the category $\b{Fun}(BH,\s{C})$ of $H$-actions in $\s{C}$.

Let $H \subset G$ be a subgroup. Then the projection map $\pi : G \to G/H$ that sends $x \in G$ to $xH \in G/H$ determines a functor $\bb{T}\pi : \bb{T}G \to \bb{T}(G/H)$. Pulling back defines another functor
	\[
	\bb{T}\pi^* : \b{Fun}(\bb{T}(G/H) , \s{C}) \to \b{Fun}( \bb{T}G, \s{C}).
	\]
Since $\pi \circ (-)h = \pi$ for every $h \in H$, the functor $\bb{T}\pi^*$ lands in $\b{Fun}(\bb{T}G,\s{C})^H$. On the other hand, if the diagram $C_\bullet : \bb{T}G \to \s{C}$ is $H$-fixed, then it factors uniquely through $\bb{T}\pi$. For suppose $C_\bullet : \bb{T}G \to \s{C}$ is $H$-fixed. Then the factored functor $q{C}_\bullet : \bb{T}(G/H) \to \s{C}$ is defined as follows:
	\begin{enumerate}
		\item{}For any object $aH \in \bb{T}(G/H)$, choose a representative $ah \in aH$ and set
			\[
			q{C}_{aH} := C_{ah} \, (= C_a ).
			\]
		\item{}For any morphism $g : aH \to gaH$ in $\bb{T}(G/H)$, choose a representative $ah \in aH$ and set
			\[
			q{C}_{g : aH \to gaH} := C_{g : ah \to gah} \, (= C_{g : a \to ga} ).
			\]
	\end{enumerate}
Similarly, if $\eta : C_\bullet \Rightarrow D_\bullet$ is a $H$-fixed natural transformation, then $q{\eta}_{aH} := \eta_a$ defines a natural transformation $q{\eta} : q{C}_\bullet \Rightarrow q{D}_\bullet$. It follows that there are strictly inverse functors
	\[
	\bb{T}\pi^* : \b{Fun}(\bb{T}(G/H),\s{C}) \stackrel{\cong}{\rightleftarrows} \b{Fun}(\bb{T}G,\s{C})^H : q.
	\]
	
To proceed further in our identification of $\b{Fun}(\bb{T}G,\s{C})^H$, we recall an observation of Hill--Hopkins--Ravenel \cite{HHR}. Let the functor 
	\[
	s : \bb{T}_H(H/H) \to \bb{T}_G(G/H)
	\]
be the inclusion of $\bb{T}_H(H/H)$ as the automorphisms of the coset $eH \in \bb{T}_G(G/H)$. The functor $s$ is an equivalence of categories, and we can construct an explicit deformation retraction
	\[
	\bb{T}_H(H/H) \leftarrow \bb{T}_G(G/H) : r
	\]
by choosing a set of $G/H$ coset representatives $\{e = g_1 , \dots , g_{|G:H|}\}$, and then setting $r(g_i H) = H$ on objects and $r(g : g_i H \to g_j H) = g_j^{-1} g g_i : H \to H$ on morphisms. Then $s \circ r \simeq \t{id}$, and $r \circ s = \t{id}$ because $g_1 = e$. The equivalence $\bb{T}_H(H/H) \simeq \bb{T}_G(G/H)$ induces an equivalence
	\[
	r^* : \b{Fun}(\bb{T}_H(H/H) , \s{C}) \stackrel{\simeq}{\rightleftarrows} \b{Fun}(\bb{T}_G(G/H) , \s{C}) : s^*,
	\]
and the category $\b{Fun}(\bb{T}_H(H/H),\s{C})$ is isomorphic to the category $\b{Fun}(BH,\s{C})$ of $H$-actions in $\s{C}$. In summary, we find that $\b{Fun}(BH,\s{C}) \simeq \b{Fun}(\bb{T}G,\s{C})^H$. 

\subsubsection{The norms of $\b{Fun}(\bb{T}G,\s{C})$}\label{subsubsec:normFunTGC} We now define norms between the fixed point subcategories of $\b{Fun}(\bb{T}G,\s{C})$. Suppose more generally that $\s{D}$ is a $G$-category and that $K \subset H \subset G$ are subgroups. Choose a set of coset representatives $e = h_1 , \dots, h_{\abs{H:K}}$ for $H/K$. This orders $H/K$, and the corresponding permutation representation on $\{ 1 , \dots , \abs{H:K} \}$ is defined by
	\[
	h \cdot h_i K = h_{\sigma(h)(i)} K.
	\]
We can form the $H$-category $\s{D}^{\times H/K}$ as in Definition \ref{def:extnorm}. Its action is given on objects by the formula $h \cdot (d_1 , \dots , d_{\abs{H:K}}) = (h \cdot d_{\sigma(h)^{-1}1} , \dots , h \cdot d_{\sigma(h)^{-1}\abs{H:K}})$, and similarly for morphisms. Crucially, there is a pair of inverse functors
	\[
	\Delta^{\t{tw}} : \s{D}^K \rightleftarrows (\s{D}^{\times H/K})^H : \t{ev}_K ,
	\]
where $\t{ev}_K$ is the first coordinate projection, and $\Delta^{\t{tw}}$ is a twisted diagonal map. The functor $\Delta^{\t{tw}}$ sends $d \in \s{D}$ to $(d , h_2 d, \dots, h_{\abs{H:K}} d)$ and similarly for morphisms.

Combining twisted diagonals with external norm maps allows one to define norms in the usual sense.

\begin{defn}\label{defn:internalnorm} Suppose $\s{D}$ is a $G$-category equipped with an external $H/K$ norm $\btn_{H/K} : \s{D}^{\times H/K} \to \s{D}$, and let $\s{D}^{\times H/K}$ and $\Delta^{\t{tw}}$ be defined as above. The \emph{norm} $\t{n}_K^H : \s{D}^K \to \s{D}^H$ is the composite
	\[
	\t{n}_K^H = \sbtn_{H/K} \circ \Delta^{\t{tw}} : \s{D}^K \stackrel{\cong}{\to} (\s{D}^{\times H/K})^H \to \s{D}^H.
	\]
\end{defn}

If $\s{D} = \b{Fun}(\bb{T}G,\s{C})$, then the norm $\t{n}_K^H$ can be identified with the Hill-Hopkins-Ravenel norm $N_K^H : \b{Fun}(BK,\s{C}) \to \b{Fun}(BH,\s{C})$. More precisely, we have the following result.

\begin{prop}\label{prop:TGCnorms} Suppose that $G$ is a finite group, $K \subsetneq H \subset G$ are subgroups, and that $\s{C}$ is a nonequivariant symmetric monoidal category. Then there is a commutative diagram
	\[
	\begin{tikzpicture}[scale=0.8]
		\node(KC) {$\b{Fun}(BK,\s{C})$};
		\node(HC) at (11,0) {$\b{Fun}(BH,\s{C})$};
		\node(TGK) at (0,2) {$\b{Fun}(\bb{T}(G/K),\s{C})$};
		\node(TGH) at (11,2) {$\b{Fun}(\bb{T}(G/H),\s{C})$};
		\node(TGCK) at (0,4) {$\b{Fun}(\bb{T}G,\s{C})^K$};
		\node(nTGC) at (5.5,4) {$\Big( \b{Fun}(\bb{T}G,\s{C})^{\times H/K} \Big)^H$};
		\node(TGCH) at (11,4) {$\b{Fun}(\bb{T}G,\s{C})^H$};
		\path[->]
		(KC) edge [below] node {$N_K^H$} (HC)
		(KC) edge [left] node {$r^*$} (TGK)
		(TGH) edge [right] node {$s^*$} (HC)
		(TGK) edge [below] node {$p^{\tn}_*$} (TGH)
		(TGK.120) edge [left] node {$\bb{T}\pi^*$} (TGCK.-119)
		(TGCK.-61) edge [right] node {$q$} (TGK.60)
		(TGH.120) edge [left] node {$\bb{T}\pi^*$} (TGCH.-119)
		(TGCH.-61) edge [right] node {$q$} (TGH.60)
		(TGCK) edge [above] node {$\Delta^{\t{tw}}$} (nTGC)
		(nTGC) edge [above] node {$\btn_{H/K}$} (TGCH)
		(TGCK) edge [bend left, above] node {$\t{n}_K^H$} (TGCH)
		;
	\end{tikzpicture}
	\]
where $p : \bb{T}(G/K) \to \bb{T}(G/H)$ is induced by the quotient $G/K \to G/H$, $p_*^\otimes$ is its monoidal pushforward, and $N_K^H$ is the norm functor.

Consequently, the fixed points of $\b{Fun}(\bb{T}G,\s{C})$ are equivalent to the symmetric monoidal Mackey functor $\underline{\s{C}}^{(-)}$, whose value at $G/H$ is the category of $H$-actions in $\s{C}$, and whose transfers are Hill--Hopkins--Ravenel norms (cf. \cite[Example 2.6]{HH}).
\end{prop}

The proof is straightforward once the correct definitions have been made, but it requires a little care because some of these functors above are defined using noncanonical choices. We review some definitions and outline one verification below.

\subsubsection{Monoidal pushforwards, Hill--Hopkins--Ravenel norms, and the proof of Proposition \ref{prop:TGCnorms}} \label{subsubsec:monoidalpsh}

Suppose that $\s{C}$ is a nonequivariant symmetric monoidal category, and recall the following definition \cite[Definition A.24]{HHR}.

\begin{defn}\label{def:fincovcat} A \emph{finite covering category} is a functor $p : I \to J$ such that
	\begin{enumerate}
		\item{}For every morphism $f : j \to j'$ in $J$ and object $i \in p^{-1}(j)$, there is a unique $I$-morphism $\til{f}$ such that $\t{dom} \til{f} =  i$ and $p \til{f} = f$.
		\item{}For every morphism $f : j \to j'$ in $J$ and object $i' \in p^{-1}(j')$, there is a unique $I$-morphism $\til{f}$ such that $\t{cod} \til{f} = i'$ and $p \til{f} = f$.
		\item{}For every object $j \in J$, the fiber $p^{-1}(j) \subset \t{Ob}(I)$ is finite.
	\end{enumerate}
\end{defn}

In \cite{Bohmann}, \cite{HHR}, the authors explain how to construct a monoidal pushforward $p^\tn_* : \b{Fun}(I,\s{C}) \to \b{Fun}(J,\s{C})$ associated to any finite covering category $p : I \to J$. For their purposes, it was not necessary to track the orderings in tensor products too carefully, and therefore those details were rightly suppressed. Unfortunately, our present work demands attention to these matters, because twisted equivariance for $\ups_T$ is entirely about the relationship between a group action and symmetric monoidal permutation maps. Thus, we review the construction of $p^\tn_*$, with a focus on orderings.

\begin{conv}We shall assume that every finite covering category $p : I \to J$ comes equipped with a chosen linear ordering on every fiber $p^{-1}(j)$.
\end{conv}

Suppose $p : I \to J$ is a finite covering category and keep notation as in Definition \ref{def:fincovcat}. We write $\til{f}_i$ for the unique lift of $f$ starting at $i$, and we define $f \cdot i := \t{cod} \til{f}_i$. For any $f : j \to j'$ in $J$, we obtain a set bijection $f \cdot (-) : p^{-1}(j) \to p^{-1}(j')$, but this map need not respect the orderings of $p^{-1}(j)$ and $p^{-1}(j')$. So suppose that the fibers $p^{-1}(j)$ and $p^{-1}(j')$ both have cardinality $n$, and write
	\[
	p^{-1}(j) = \{ i_1 < \dots < i_n \}	\quad\text{and}\quad	p^{-1}(j') = \{ i'_1 < \dots < i'_n \}.
	\]
Then we define the permutation $\sigma(f) \in \Sigma_n$ by the formula $f \cdot i_k = i'_{\sigma(f)k}$. Equivalently, there is a lift $\til{f}_{i_k} : i_k \to i'_{\sigma(f)k}$ of $f$.

Recall (Definition \ref{defn:symmmonobj}) that the \emph{standard tensor products} on $\s{C}$ are $\btn_0 := e$, $\btn_1 := \t{id}$, $\btn_2 := \tn$, and $\btn_{n+1} := \tn \circ (\btn_n \times \t{id})$ for $n \geq 2$.

\begin{defn}Keep notation as above. For any symmetric monoidal category $\s{C}$ and finite covering category $p : I \to J$, the \emph{monoidal pushforward} functor $p^\tn_* : \b{Fun}(I,\s{C}) \to \b{Fun}(J , \s{C})$ is defined as follows.
	\begin{enumerate}[label=(\alph*)]
		\item{}Given a functor $X : I \to \s{C}$, we define $p^\tn_*X : J \to \s{C}$ on objects $j \in J$ by
			\[
			(p^\tn_*X)(j) := \begin{array}{c}\btn_n\end{array}\!\!\!{} \Big( X(i_1) , \dots , X(i_n) \Big),
			\]
		where $p^{-1}(j) = \{ i_1 < \dots < i_n \}$ and $\btn_n$ is the standard $n$-fold tensor product. Then, given $f: j \to j'$ in $J$, we define $p^\tn_*(f) : p^\tn_*(j) \to p^\tn_*(j')$ to be the composite
			\[
			\begin{tikzpicture}
				\node(A) at (0,0) {$\btn_n \Big( X(i_k) \Big)$};
				\node(B) at (5.5,0) {$\btn_n \Big( X(i'_{\sigma(f)k}) \Big)$};
				\node(C) at (10,0) {$\btn_n \Big( X(i'_k) \Big)$};
				\path[->]
				(A) edge [above] node {$\btn_n \Big( X(\til{f}_{i_k}) \Big)$} (B)
				(B) edge [above] node {$\sigma(f)$} (C)
				;
			\end{tikzpicture}
			\]
		where $\sigma(f)$ is the symmetric monoidal coherence map for $\s{C}$ that permutes the factors of the tensor product by $\sigma(f)$.
		\item{}Given a natural transformation $\eta_\bullet : X_\bullet \Rightarrow Y_\bullet$, we define
			\[
			(p^\tn_*\eta)_j := \begin{array}{c}\btn_n\end{array}\!\!\!{} \Big( \eta_{i_1} , \dots , \eta_{i_n} \Big),
			\]
		where $p^{-1}(j) = \{ i_1 < \dots < i_n \}$.
	\end{enumerate}
\end{defn}

The monoidal pushforward is a key ingredient in the definition of the norm functor $N_K^H$, which we now recall (cf. \cite[Definition A.52]{HHR}).

\begin{defn}For any subgroups $K \subset H \subset G$, the \emph{norm functor} $N_K^H : \b{Fun}(BK,\s{C}) \to \b{Fun}(BH,\s{C})$ is the composite 
	\[
	p^\tn_* \circ r^* : \b{Fun}(BK,\s{C}) \to \b{Fun}(\bb{T}_H(H/K),\s{C}) \to \b{Fun}(BH,\s{C}),
	\]
where $r : \bb{T}_H(H/K) \to \bb{T}_K(K/K) = BK$ is the retraction described in \S\ref{subsubsec:FunTGCfp}, and $p^\tn_*$ is the monoidal pushforward for $p : \bb{T}_H(H/K \to H/H)$.
\end{defn}

From here, the claims in Proposition \ref{prop:TGCnorms} follow by unravelling the definitions. We sketch one possible route, but leave the details to the interested reader.

\begin{proof}[Sketch of proof of Proposition \ref{prop:TGCnorms}] The point is to make compatible choices of coset representatives and orderings. 

First, choose sets $\{e = g_1 , \dots , g_{|G:H|}\}$ and $\{e = h_1 , \dots , h_{|H:K|} \}$ of $G/H$ and $H/K$ coset representatives, and give the orbits $G/H$ and $H/K$ the corresponding orders. We obtain a set $\{ g_i h_j \, | \, 1 \leq i \leq |G:H| \, , \, 1 \leq j \leq |H:K| \}$ of $G/K$ coset representatives, and we order $G/K$ lexicographically as follows:
	\[
	K < h_2 K < \dots < h_{|H:K|} K < g_2 K < g_2 h_2 K < \dots < g_2 h_{|H:K|} K < \cdots .
	\]
From here, we
	\begin{enumerate}[label=(\alph*)]
		\item{}use the relation $h \cdot h_i K = h_{\sigma(h) i} K$ to define $\Gamma(H/K) = \{(h,\sigma(h)) \, | \, h \in H\}$, and give $\b{Fun}(\bb{T}G,\s{C})^{\times H/K}$ the diagonal $H$-action twisted by $\sigma$,
		\item{}construct the norm map $\btn_{H/K} : \b{Fun}(\bb{T}G,\s{C})^{\times H/K} \to \b{Fun}(\bb{T}G,\s{C})$ as in Construction \ref{const:TGCnorms}, using the $G/H$ coset representatives $g_i$,
		\item{}define $p^\tn_* : \b{Fun}(\bb{T}(G/K),\s{C}) \to \b{Fun}(\bb{T}(G/H),\s{C})$ using the order on the fibers of $p : \bb{T}(G/K) \to \bb{T}(G/H)$ induced by the order on $G/K$,
		\item{}define $r : \bb{T}_G(G/K) \to \bb{T}_K(K/K)$ using the coset representatives $g_i h_j$, and
		\item{}for the norm $N_K^H : K\s{C} \to H\s{C}$, we define $r : \bb{T}_H(H/K) \to \bb{T}_K(K/K)$ using the coset representatives $h_j$, and we use the order on $H/K$ to construct the monoidal pushforward $p^\tn_* : \b{Fun}(\bb{T}_H(H/K),\s{C}) \to \b{Fun}(\bb{T}_H(H/H),\s{C})$.
	\end{enumerate}
With these choices, one can verify that the diagram in Proposition \ref{prop:TGCnorms} commutes. For the upper square, we find it easiest to show that $p^{\tn}_*$ and the composite $q \circ \btn_{H/K} \circ \Delta^{\t{tw}} \circ \bb{T}\pi^*$ are equal.
\end{proof}

\section{$2$-categories of operadic algebras}\label{sec:opalg2cats}

We now return to general theory, and introduce some more definitions needed in the operadic coherence theorem for NSMCs. Suppose $\s{O}$ is an operad in $\b{Cat}^G$, so that $\s{O}$ can act on $G$-categories. Then the collection of all $\s{O}$-algebras fits into a variety of $2$-categories, which are precisely analogous to the $2$-categories considered in \S\ref{ch:NSMC,sec:defn}. We spell out their structure below, starting with objects.

\begin{defn} Suppose $\s{C}$ is a $G$-category. The \emph{endomorphism operad} $\b{End}(\s{C})$ is the operad in $\b{Cat}^G$ whose $n$th level is the $G$-category $\b{Cat}_G(\s{C}^{\times n},\s{C})$. For any operad $\s{O}$ in $\b{Cat}^G$, a \emph{strict $\s{O}$-algebra} in $\b{Cat}^G$ is a $G$-category $\s{C}$ equipped with an operad map $\abs{\cdot}_\s{C} : \s{O} \to \b{End}(\s{C})$.
\end{defn}

We think of the map $\abs{\cdot}_{\s{C}}$ as realizing an abstract operation symbol $f \in \s{O}$ as an actual operation on $\s{C}$. In contrast to Definition \ref{defn:NSMC}, an operad action $\s{O} \to \b{End}(\s{C})$ parametrizes much more than just a small set of generating data, because $\s{O}$ is closed under composition.

Next, we consider $1$-morphisms.

\begin{defn}\label{defn:laxopmap}Suppose that $\s{O}$ is an operad in $\b{Cat}^G$, and that $\s{C}$ and $\s{D}$ are strict $\s{O}$-algebras in $\b{Cat}^G$. A \emph{lax $\s{O}$-algebra morphism} $(F,\partial_\bullet) : \s{C} \to \s{D}$ consists of:
	\begin{enumerate}
		\item{}a $G$-functor $F : \s{C} \to \s{D}$, and
		\item{}for each $n \geq 0$ and $x \in \s{O}(n)$, a natural transformation $(\vd_n)_x : \abs{x}_{\s{D}} \circ F^{\times n} \Rightarrow F \circ \abs{x}_{\s{C}}$ of functors $\s{C}^{\times n} \to \s{D}$.
	\end{enumerate}
These data are required to satisfy the following conditions:
	\begin{enumerate}[label=(\roman*)]
		\item{}for each $n \geq 0$, the maps $(\vd_n)_x$ vary naturally in $x \in \s{O}(n)$,
		\item{}for each $n \geq 0$, $x \in \s{O}(n)$, and $(g,\sigma) \in G \times \Sigma_n$, the equation $(g,\sigma) \cdot (\vd_n)_x = (\vd_n)_{(g,\sigma) \cdot x}$ holds, i.e. $(\vd_n)_x$ is $(G \times \Sigma_n)$-equivariant in $x$,
		\item{}$(\vd_1)_{\t{id}} = \t{id}_F : \abs{\t{id}}_{\s{D}} \circ F^{\times 1} \Rightarrow F \circ \abs{\t{id}}_{\s{C}}$, and
		\item{}for any $y \in \s{O}(m)$ and $x_i \in \s{O}(k_i)$, the transformation 
		\[
		(\vd_{k_1 + \cdots + k_m})_{\gamma(y;x_1 , \dots , x_m)} : \abs{\gamma(y;x_1, \dots , x_m)}_{\s{D}} \circ F^{\times k_1 + \cdots + k_m} \Rightarrow F \circ \abs{\gamma(y;x_1, \dots , x_m)}_{\s{C}}
		\]
		is equal to the composite
		\[
		\begin{tikzpicture}[scale=0.75]
			\node(A) {$\abs{y}_{\s{D}} \circ \Big( (\abs{x_1}_{\s{D}} \circ F^{\times k_1}) \times \cdots \times (\abs{x_m}_{\s{D}} \circ F^{\times k_m}) \Big)$};
			\node(B) at (0,-2) {$(\abs{y}_{\s{D}} \circ F^{\times m}) \circ \Big( \abs{x_1}_{\s{C}} \times \cdots \times \abs{x_m}_{\s{C}} \Big)$};
			\node(C) at (0,-4) {$F \circ \abs{y}_{\s{C}} \circ \Big( \abs{x_1}_{\s{C}} \times \cdots \times \abs{x_m}_{\s{C}} \Big).$};
			
			\node(1) at (2.55,-1) {$\Downarrow \, \t{id}_{\abs{y}_{\s{D}}} \circ \Big( (\vd_{k_1})_{x_1} \times \cdots \times (\vd_{k_m})_{x_m} \Big) $};
			\node(2) at (2.4,-3) {$\Downarrow \, (\vd_m)_y \circ \Big( \t{id}_{\abs{x_1}_{\s{C}}} \times \cdots \times \t{id}_{\abs{x_m}_{\s{C}}} \Big)$};
			;
		\end{tikzpicture}
		\]
	\end{enumerate}
A \emph{pseudomorphism} (resp. \emph{strict morphism}) is a lax morphism such that $(\vd_n)_x$ is an isomorphism (resp. identity) for every $n \geq 0$ and $x \in \s{O}(n)$.
\end{defn}

The natural transformations $\vd$ play the role of the comparison maps $F_e$, $F_\otimes$, and $F_{\bigotimes_T}$ in Definition \ref{defn:laxnormedfunctor}, but as before, we have specified far more than a small set of generating data. Condition (2) gives a comparison map for every operation, and conditions (i) -- (iv) ensure that these maps are suitably compatible.

Now we consider $2$-morphisms.

\begin{defn}\label{defn:Otransfm}Suppose that $\s{O}$ is an operad in $\b{Cat}^G$, that $\s{C}$ and $\s{D}$ are strict $\s{O}$-algebras, and that $(F,\vd_\bullet) , (F' , \vd'_\bullet) : \s{C} \rightrightarrows \s{D}$ is a pair of lax $\s{O}$-algebra morphisms. An \emph{$\s{O}$-transformation $\omega : (F,\vd_\bullet) \Rightarrow (F',\vd'_\bullet)$} is a $G$-natural transformation $\omega : F \Rightarrow F'$ such that for every $n \geq 0$ and $x \in \s{O}(n)$, the composite transformations
	\[
	\begin{tikzpicture}[scale=0.75]
		\node(11) at (-2,0) {$|x|_{\s{D}} \circ F^{\times n}$};
		\node(21) at (-2,-2) {$F \circ \abs{x}_{\s{C}}$};
		\node(31) at (-2,-4) {$F' \circ \abs{x}_{\s{C}}$};
		\node(12) at (4,0) {$\abs{x}_{\s{D}} \circ F^{\times n}$};
		\node(22) at (4,-2) {$\abs{x}_{\s{D}} \circ (F')^{\times n}$};
		\node(32) at (4,-4) {$F' \circ \abs{x}_{\s{C}}$};
		
		\node(eq) at (0.8,-2) {$\text{and}$};
		
		\node(c12) at (-2.55,-1) {$(\vd_n)_x \, \Downarrow$};
		\node(c23) at (-2.92,-3) {$\omega \circ \t{id}_{\abs{x}_{\s{C}}} \, \Downarrow$};
		\node(d12) at (5,-1) {$\Downarrow \, \t{id}_{\abs{x}_{\s{D}}} \circ \omega^{\times n}$};
		\node(d23) at (4.35,-3) {$\Downarrow \, (\vd'_n)_x$};
	\end{tikzpicture}
	\]
are equal.
\end{defn}

Again, we have specified more compatibility conditions than in Definition \ref{defn:Ntransfm}, because we range over all $n \geq 0$ and $x \in \s{O}(n)$.

As with NSMCs, the $2$-category structure on $\ub{Cat}^G$ lifts to $\s{O}$-algebras. The composite of lax $\s{O}$-morphisms $(H,\ve_\bullet) \circ (F,\vd_\bullet)$ is obtained by composing underlying functors and comparison data, e.g.
	\[
	[\t{id}_H \circ (\vd_n)_x] \vc [ (\ve_n)_x \circ \t{id}_{F^{\times n}}] : \abs{x} \circ H^{\times n} \circ F^{\times n} \Rightarrow H \circ \abs{x} \circ F^{\times n} \Rightarrow H \circ F \circ \abs{x},
	\]
and the vertical and horizontal composites of transformations are computed in $\ub{Cat}^G$. Identities of both sorts are also inherited from $\ub{Cat}^G$. 

\begin{nota} Let $\s{O}\t{-}\b{AlgLax}$ be the 2-category of all strict $\s{O}$-algebras in $\b{Cat}^G$, lax $\s{O}$-morphisms, and $\s{O}$-transformations between them. There are sub-$2$-categories
	\[
	\s{O}\t{-}\b{AlgSt} \subset \s{O}\t{-}\b{AlgPs} \subset \s{O}\t{-}\b{AlgLax} ,
	\]
whose $1$-morphisms are pseudo or strict, and there is also a forgetful $2$-functor $\s{O}\t{-}\b{AlgLax} \to \ub{Cat}^G$.
\end{nota}

We conclude this section with a brief digression. For future reference, it will be convenient to recast conditions (ii) -- (iv) in Definition \ref{defn:laxopmap} in more operadic terms. In a certain sense, they state that the assignment $x \mapsto (\vd_n)_x$ is an operad map, which will streamline the discussion  in Proposition \ref{prop:evbijfun}.

\begin{defn}\label{defn:laxmapop}Suppose that $\s{C}$ and $\s{D}$ are strict $\s{O}$-algebras and that $F : \s{C} \to \s{D}$ is a $G$-functor. We define an operad $\c{L}ax = \c{L}ax(\s{O},\s{C},\s{D},F)$ in $\b{Set}^G$ as follows.
	\begin{enumerate}
		\item{}For each integer $n \geq 0$, let $\c{L}ax(n)$ be the set of pairs $(x, \xi)$, where $x \in \s{O}(n)$ and $\xi : \abs{x}_{\s{D}} \circ F^{\times n} \Rightarrow F \circ \abs{x}_{\s{C}}$ is a natural transformation in $\b{Cat}_G(\s{C}^{\times n} , \s{D})$. The $G \times \Sigma_n$-action is $(g,\sigma) \cdot (x,\xi) = \big( (g,\sigma) \cdot x , (g,\sigma) \cdot \xi \big)$.
		\item{}Define the identity for $\c{L}ax$ to be the pair $(\t{id}, \t{id}_F)$.
		\item{}Define composition maps
			\[
			\gamma_{\c{L}} : \c{L}ax(k) \times \c{L}ax(j_1) \times \cdots \times \c{L}ax(j_k) \to \c{L}ax(j_1 + \cdots + j_k)
			\]
		by setting $\gamma_{\c{L}}((z,\zeta) ; (x_1,\xi_1) , \dots , (x_k,\xi_k))$ equal to 
		\[
		\Big( \gamma_{\s{O}}(z;x_1,\dots,x_k) \,\,,\,\,  [\zeta \circ  (\t{id}_{\abs{x_1}_{\s{C}}} \times \cdots \times \t{id}_{\abs{x_k}_{\s{C}}} )] \vc [ \t{id}_{\abs{z}_{\s{D}}} \circ ( \xi_1 \times \cdots \times \xi_k)] \Big),
		\]
	where $\circ$ denotes horizontal composition and $\bullet$ denotes vertical composition.
	\end{enumerate}
The first coordinate projection defines a map $\pi_1 : \c{L}ax \to \t{Ob}(\s{O})$ of $G$-operads.
\end{defn}

\begin{rem}There are suboperads $\t{Ob}\s{O} \cong \c{S}t \subset \c{P}s \subset \c{L}ax$ obtained by restricting all $\xi$'s to be identity transformations and natural isomorphisms, respectively.
\end{rem}

The following is a quick check of definitions.

\begin{prop}\label{prop:laxopmap} Suppose that $\s{C}$ and $\s{D}$ are strict $\s{O}$-algebras, that $F : \s{C} \to \s{D}$ is a $G$-functor, and that for each $n \geq 0$ and $x \in \s{O}(n)$, we are given a natural transformation $(\vd_n)_x : \abs{x}_{\s{D}} \circ F^{\times n} \Rightarrow F \circ \abs{x}_{\s{C}}$.\footnote{For any $n \geq 0$, $\vd_n = ((\vd_n)_x)_{x \in \s{O}(n)}$ is a tuple of natural transformations, but we do \emph{not} assume that $\vd_n$ is natural in $x$.} Then conditions (ii) -- (iv) of Definition \ref{defn:laxopmap} hold if and only if the section $s : \t{Ob}(\s{O}) \to \c{L}ax$ of $\pi_1 : \c{L}ax \to \t{Ob}(\s{O})$ defined by $s_n(x) := (x , (\vd_n)_x)$ is a map of $G$-operads.
\end{prop}

\begin{proof} The map $s$ is $G \times \Sigma$ equivariant if and only if condition (ii) holds, it preserves the unit if and only if condition (iii) holds, and it preserves composition if and only if condition (iv) holds.
\end{proof}

\section{The coherence theorem}\label{ch:NSMC,sec:cohthm}

The classical Mac Lane coherence theorem states that all sensible diagrams built up from the associativity, unitality, and symmetry isomorphisms in a symmetric monoidal category must commute. In this section and the next, we shall generalize and reinterpret this statement operadically. In particular, we will show that any $\c{T}$-normed symmetric monoidal structure on a $G$-category $\s{C}$ extends to an action by an operad $\c{SM}_{\c{T}}$, which parametrizes all diagrams that must commute for formal reasons. We make this statement precise in this section (Theorem \ref{thm:coh}), but we defer the proof to \S\ref{sec:pfcoh}.

\subsection{Permutative categories} To explain our formulation of the coherence theorem, we review a well-known example (cf. \cite{MayPerm}). Recall that a \emph{permutative category} is a strictly associative and unital symmetric monoidal category, i.e. a symmetric monoidal category for which the associativity and unit isomorphisms are identity transformations. Though permutative categories are rare in nature, every symmetric monoidal category can be rigidified to a permutative one, so there is no loss of generality in restricting attention to permutative categories.

If $\s{C}$ is a permutative category, then the only $n$-ary operations generated from its product $\otimes^{\s{C}}$ are the $n$-ary tensor products of the form
	\[
	x_{\sigma^{-1}1} \otimes^{\s{C}} x_{\sigma^{-1}2} \otimes^{\s{C}} \cdots \otimes^{\s{C}} x_{\sigma^{-1}n} ,
	\]
where $\sigma \in \Sigma_n$ is a permutation of $n$ letters. In this case, the coherence theorem states that for any $\sigma,\tau \in \Sigma_n$, there is a unique natural isomorphism
	\[
	x_{\sigma^{-1}1} \otimes^{\s{C}} x_{\sigma^{-1}2} \otimes^{\s{C}} \cdots \otimes^{\s{C}} x_{\sigma^{-1}n} \Rightarrow
	x_{\tau^{-1}1} \otimes^{\s{C}} x_{\tau^{-1}2} \otimes^{\s{C}} \cdots \otimes^{\s{C}} x_{\tau^{-1}n}
	\]
built up from instances of $\beta^{\s{C}}$.

We can interpret this situation in terms of operad actions. Recall that the associativity operad $\b{As}$ is the operad in $\b{Set}$ whose $n$th level is $\Sigma_n$. We think of the permutation $\sigma \in \Sigma_n$ as the $n$-fold product
	\[
	x_{\sigma^{-1}1} x_{\sigma^{-1}2} \cdots x_{\sigma^{-1}n} .
	\]
Thus, the operad $\b{As}$ suffices to encode the operations of a permutative category, but it cannot encode the coherence isomorphisms because it is discrete. To get these isomorphisms, we adjoin a unique isomorphism $\sigma \to \tau$ between any two permutations $\sigma, \tau \in \Sigma_n$ in the same level. We isolate this construction.

Let $\b{Cat}$ denote the category of small categories. There is a functor
	\[
	\t{Ob} : \b{Cat} \to \b{Set}
	\]
that sends a small category to its set of objects. In analogy to the underlying set functor $U : \b{Top} \to \b{Set}$, the functor $\t{Ob}$ has a left adjoint $(-)^{\t{disc}} : \b{Set} \to \b{Cat}$, which sends a set $X$ to the discrete category $X^{\t{disc}}$. The category $X^{\t{disc}}$ has object set $X$, and no nonidentity morphisms. Now we dualize.

\begin{defn}\label{defn:codiscrete} For any set $X$, the \emph{codiscrete category} $\til{X}$ is the small category with object set $X$, and a unique morphism $x \to y$ for each pair of elements $x,y \in X$. There is an adjunction
	\[
	\t{Ob} : \b{Cat} \rightleftarrows \b{Set} : \til{(-)} .
	\]
\end{defn}

Being a right adjoint, the functor $\til{(-)}$ preserves limits, and therefore it preserves operads. Furthermore, the functor $\til{(-)}$ sends every nonempty set to a contractible groupoid in which every diagram commutes.

The \emph{permutativity operad} $\s{P}$ is the operad obtained by applying $\til{(-)}$ levelwise to $\b{As}$. If a category $\s{C}$ is an algebra over $\s{P}$, i.e. there is a map $\abs{\cdot}_{\s{C}} : \s{P} \to \b{End}(\s{C})$, then evaluating at
	\[
	\otimes = \t{id}_2 \in \Sigma_2	\quad\t{and}\quad	e = \t{id}_0 \in \Sigma_0	\quad\t{and}\quad \beta : \t{id}_2 \to (12) \in \til{\Sigma_2}
	\]
gives a permutative category structure on $\s{C}$. Conversely, if $(\s{C},\otimes^{\s{C}},e^{\s{C}},\beta^{\s{C}})$ is a permutative category, then the Mac Lane coherence theorem implies that there is a unique operad map $\s{P} \to \b{End}(\s{C})$, which sends $\otimes$ to $\otimes^{\s{C}}$, $e$ to $e^{\s{C}}$, and $\beta$ to $\beta^{\s{C}}$. All told, there is a one-to-one correspondence
	\[
	\t{ev} : \{\t{$\s{P}$-algebras in $\b{Cat}$}\} \stackrel{\cong}{\to} \{\t{small permutative categories}\}
	\]
given by evaluation at $\otimes \in \s{P}(2)$, $e \in \s{P}(0)$, and $\beta \in \s{P}(2)$.

On the other hand, one can recover the coherence theorem from the bijectivity (or really the surjectivity) of evaluation. For suppose $(\s{C},\otimes^{\s{C}},e^{\s{C}},\beta^{\s{C}})$ is a permutative category, and let $\abs{\cdot}_{\s{C}} : \s{P} \to \b{End}(\s{C})$ be a lift of this structure to a $\s{P}$-action. Suppose further that we are given a diagram in $\s{C}$ such as
	\[
	\begin{tikzpicture}[scale=1.2]
		\node(A) at (0,0) {$a \otimes^{\s{C}} b \otimes^{\s{C}}c$};
		\node(B) at (2,1) {$a \otimes^{\s{C}} c \otimes^{\s{C}} b$};
		\node(C) at (6,1) {$c \otimes^{\s{C}} a \otimes^{\s{C}} b$};
		\node(D) at (8,0) {$c \otimes^{\s{C}} b \otimes^{\s{C}} a$};
		\node(E) at (4,-1) {$b \otimes^{\s{C}} a \otimes^{\s{C}} c$};
		
		\path[->]
		(A) edge [above left] node {$\t{id} \otimes^{\s{C}} \beta^{\s{C}}$} (B)
		(B) edge [above] node {$\beta^{\s{C}} \otimes^{\s{C}}\t{id}$} (C)
		(C) edge [above right] node {$\t{id} \otimes^{\s{C}} \beta^{\s{C}}$} (D)
		(A) edge [below left] node {$\beta^{\s{C}} \otimes \t{id}$} (E)
		(E) edge [below right] node {$\beta^{\s{C}}$} (D)
		;
	\end{tikzpicture}
	\]
In this case, there is an analogous diagram
	\[
	\begin{tikzpicture}[scale=1.2]
		\node(A) at (0,0) {$\t{id}_3$};
		\node(B) at (2,1) {$(23)$};
		\node(C) at (6,1) {$(123)$};
		\node(D) at (8,0) {$(13)$};
		\node(E) at (4,-1) {$(12)$};
		
		\path[->]
		(A) edge [above left] node {$\gamma(\otimes;\t{id},\beta)$} (B)
		(B) edge [above] node {$\gamma(\otimes;\beta,\t{id}) \cdot (23)$} (C)
		(C) edge [above right] node {$\gamma(\otimes;\t{id},\beta) \cdot (123)$} (D)
		(A) edge [below left] node {$\gamma(\otimes;\beta,\t{id})$} (E)
		(E) edge [below right] node {$\gamma(\beta;\otimes,\t{id}) \cdot (12)$} (D)
		;
	\end{tikzpicture}
	\]
in $\s{P}(3)$, whose image under $\abs{\cdot}_{\s{C}}$ and evaluation at $(a,b,c)$ is the previous diagram. Since every diagram in $\s{P}(3)$ commutes, so do their images, and hence the original diagram commutes. In general, every diagram in $\s{C}$ that lifts to $\s{P}$ must commute.

Thus, we think of the bijection between permutative categories and $\s{P}$-algebras as a reformulation of the classical coherence theorem. In what follows, we shall prove an analogous result for NSMCs.

\subsection{The coherence theorem for NSMCs} In order to state the coherence theorem for NSMCs, we must first introduce the corresponding operads. To that end, consider the category $\b{Sym}(\b{Set}^G)$ of symmetric sequences of $G$-sets, and let $\b{Op}(\b{Set}^G)$ be the category of symmetric operads of $G$-sets. By general considerations, there is a free-forgetful adjunction
	\[
	F : \b{Sym}(\b{Set}^G) \rightleftarrows \b{Op}(\b{Set}^G) : U.
	\]
We now define the operad $\c{SM}_{\c{T}}$ corresponding to $\c{T}$-NSMC structures.

\begin{defn}\label{defn:SMT} Given any set of exponents $\c{T} = (T_i)_{i \in I}$, where each $T_i$ is a finite, ordered $H_i$-set (cf. Definition \ref{defn:NSMC}), let
	\[
	S_{\c{T}} = \frac{G \times \Sigma_0}{G \times \{\t{id}_0\}} \sqcup \frac{G \times \Sigma_2}{G \times \{\t{id}_2\}} \sqcup \coprod_{i \in I} \frac{G \times \Sigma_{\abs{T_i}}}{\Gamma(T_i)} \in \b{Sym}(\b{Set}^G),
	\]
where $\Gamma(T_i)$ is the graph of the permutation representation corresponding to $T_i$ (cf. Construction \ref{const:TGCnorms}). Let $F(S_{\c{T}})$ be the free operad on $S_{\c{T}}$, and define the categorical operad $\c{SM}_{\c{T}}$ by
	\[
	\c{SM}_{\c{T}} := \til{F(S_{\c{T}})}.
	\]
By the considerations in \S\ref{sec:opalg2cats}, we obtain $2$-categories
	\[
	\c{SM}_{\c{T}}\t{-}\b{AlgSt} \subset \c{SM}_{\c{T}}\t{-}\b{AlgPs} \subset \c{SM}_{\c{T}}\t{-}\b{AlgLax}
	\]
of $\c{SM}_{\c{T}}$-algebras, equipped with $1$-morphisms of various strengths.

\end{defn}

Our next task is to define the evaluation functor $\t{ev} : \c{SM}_{\c{T}}\t{-}\b{AlgLax} \to \c{T}\b{SMLax}$. For that, we need some notation.

\begin{nota}\label{nota:eltFS} Let $\eta : S_{\c{T}} \to F(S_{\c{T}})$ be the unit of the adjunction. We write
	\[
	e = G \times \{\t{id}_0\}	\,,\,	\otimes = G \times \{\t{id}_2\}	\,,\,	\sbtn_{T_i} = \Gamma(T_i)	\, \in S_{\c{T}} ,
	\]
and we will sometimes write
	\[
	e = \eta(G \times \{\t{id}_0\})	\, , \,	\otimes = \eta(G \times \{\t{id}_2\})	\, , \,	\sbtn_{T_i} = \eta(\Gamma(T_i)) \, \in F(S_{\c{T}})
	\]
to reduce clutter.

Now let $\gamma$ denote operadic composition in $\c{SM}_{\c{T}}$, and let
	\begin{align*}
		\sbtn_0 &= e	\\
		\sbtn_1 &= \t{id}	\\
		\sbtn_n &= \underbrace{\gamma(\otimes;\gamma(\otimes; \cdots \gamma(\otimes;\otimes,\t{id}) \cdots , \t{id}),\t{id})}_{n - 1 \t{ copies of } \otimes}	\quad \t{if $n > 1$.}
	\end{align*}
We write
	\begin{align*}
		\alpha : \gamma(\otimes;\otimes,\t{id}) \to \gamma(\otimes;\t{id},\otimes)	\quad&\quad	\lambda: \gamma(\otimes;e,\t{id}) \to \t{id}	\\
		\beta : \tn  \to \tn \cdot (12)	\quad&\quad	\rho : \gamma(\otimes;\t{id},e) \to \t{id}	\\
		\upsilon_{T_i} : \sbtn_{T_i} \to \sbtn_{\abs{T_i}}	\quad
	\end{align*}
for the unique morphisms of $\c{SM}_{\c{T}}$ indicated.
\end{nota}

From here, we can define the evaluation map.

\begin{defn} Let $\c{T} = (T_i)_{i \in I}$ be a set of exponents, where each $T_i$ is a finite, ordered $H_i$-set. Suppose $\s{C}$ is a $\c{SM}_{\c{T}}$-algebra and let $\abs{\cdot} : \c{SM}_{\c{T}} \to \b{End}(\s{C})$ be its structure map. We can extract a $\c{T}$-normed symmetric monoidal structure on $\s{C}$ as follows. First, consider the values of $\abs{\cdot}$ on the generators of $\c{SM}_{\c{T}}$. Define $\otimes^{\s{C}} = \abs{\otimes}$, $e^{\s{C}} = \abs{e}$, \dots, $\alpha^{\s{C}} = \abs{\alpha}$, $\lambda^{\s{C}} = \abs{\lambda}$, \dots. We let $\t{ev}\s{C}$ denote the $G$-category $\s{C}$, equipped with these functors and natural transformations.

Next, suppose $(F,\vd_\bullet) : (\s{C},\abs{\cdot}_{\s{C}}) \to (\s{D},\abs{\cdot}_{\s{D}})$ is a lax $\c{SM}_{\c{T}}$-morphism between strict $\c{SM}_{\c{T}}$-algebras. Take $F_e := (\vd_0)_{e}$, $F_{\tn} := (\vd_2)_{\tn}$, and $F_{\btn_{T_i}} := (\vd_{|T_i|})_{\btn_{T_i}}$ for every $i \in I$. We let $\t{ev}F$ denote the $G$-functor $F$, together with these natural transformations.

Finally, suppose that $(\s{C},\abs{\cdot}_{\s{C}})$ and $(\s{D},\abs{\cdot}_{\s{D}})$ are strict $\c{SM}_{\c{T}}$-algebras, that $(F,\vd_\bullet)$ and $(F',\vd'_\bullet)$ are lax $\c{T}$-monoidal functors $\s{C} \to \s{D}$, and that $\omega : (F,\vd_\bullet) \Rightarrow (F',\vd_\bullet')$ is a $\c{SM}_{\c{T}}$-transformation between them. We set $\t{ev} \omega = \omega$.
\end{defn}

\begin{prop}Let $\c{T} = (T_i)_{i \in I}$ be a set of exponents, where each $T_i$ is a finite, ordered $H_i$-set. The evaluation map
	\[
	\t{ev} : \c{SM}_{\c{T}}\t{-}\b{AlgLax} \to \c{T}\b{SMLax}
	\]
is a $2$-functor.
\end{prop}

\begin{proof} We must show that $\t{ev}$ sends objects, $1$-morphisms, and $2$-morphisms in $\c{SM}_{\c{T}}\t{-}\b{AlgLax}$ to the corresponding data in $\c{T}\b{SMLax}$. From here, $2$-functoriality follows because composition and identities are defined in the same way.

We begin with objects. Suppose $\abs{\cdot} : \c{SM}_{\c{T}} \to \b{End}(\s{C})$ is a $\c{SM}_{\c{T}}$-algebra with underlying $G$-category $\s{C}$. Observe that every diagram in $\c{SM}_{\c{T}}$ commutes, and that $\abs{\cdot}$ is a map of operads in $G$-categories. Thus, it will be enough to locate the coherence diagrams for a NSMC in $\c{SM}_{\c{T}}$. Consider the pentagon axiom for $\tn$. This comes from the commutative pentagon
	\[
	\begin{tikzpicture}
		\node(1) at (0,0) {$\gamma(\otimes;\gamma(\otimes;\otimes,\t{id}),\t{id})$};
		\node(2) at (4,1) {$\gamma(\otimes;\otimes,\otimes)$};
		\node(3) at (8,0) {$\gamma(\otimes;\t{id},\gamma(\otimes;\t{id},\otimes))$};
		\node(4) at (1.5,-1.5) {$\gamma(\otimes;\gamma(\otimes;\t{id},\otimes),\t{id})$};
		\node(5) at (6.5,-1.5) {$\gamma(\otimes;\t{id},\gamma(\otimes;\otimes,\t{id}))$};
		\path[->]
		(1) edge [above left] node {$\gamma(\alpha;\otimes , \t{id} , \t{id})$} (2)
		(2) edge [above right] node {$\gamma(\alpha;\t{id},\t{id},\otimes)$} (3)
		(1) edge [left] node {$\gamma(\otimes ; \alpha , \t{id}) \quad$} (4)
		(4) edge [below] node {$\gamma(\alpha ; \t{id} ,\otimes , \t{id})$} (5)
		(5) edge [right] node {$\quad \gamma(\otimes ; \t{id} , \alpha)$} (3)
		;
	\end{tikzpicture}
	\]
in $\c{SM}_{\c{T}}(4)$. The remaining symmetric monoidal coherence diagrams axioms are visible in $\c{SM}_{\c{T}}(1) - \c{SM}_{\c{T}}(3)$. Lastly, twisted equivariance for $\ups_{T_i}^{\s{C}}$ can be deduced from a diagram in $\c{SM}_{\c{T}}(|T_i|)$. Given any index $i \in I$ and $h \in H_i$, there is a commutative diagram
	\[
	\begin{tikzpicture}
		\node(A) at (0,0) {$h \cdot \btn_{T_i} $};
		\node(B) at (4,0) {$\btn \!{}_{T_i} \cdot \sigma(h)$};
		\node(C) at (4,-1.5) {$\btn \!{}_{|T_i|} \cdot \sigma(h)$};
		\node(D) at (0,-3) {$h \cdot \btn \!{}_{|T_i|} $};
		\node(E) at (4,-3) {$\btn \!{}_{|T_i|}$};
		\path[->]
		(A) edge [above] node {$\id$} (B)
		(A) edge [left] node {$h \cdot \upsilon_{T_i}$} (D)
		(B) edge [right] node {$\upsilon_{T_i} \cdot \sigma(h)$} (C)
		(C) edge [right] node {$\t{composite of $\alpha$'s and $\beta$'s}$} (E)
		(D) edge [below] node {$\id$} (E)
		;
	\end{tikzpicture}
	\]
in $\c{SM}_{\c{T}}(|T_i|)$, where $(h,\sigma(h)) \in \Gamma(T_i)$. This maps to the twisted equivariance diagram in Definition \ref{defn:NSMC}, but with each $C_i$ replaced with $h^{-1} C_i$. It follows that $\t{ev}\s{C}$ is a $\c{T}$-NMSC.

Next, we consider $1$-morphisms. Suppose $(F , \partial_\bullet) : (\s{C},\abs{\cdot}_{\s{C}}) \to (\s{D},\abs{\cdot}_{\s{D}})$ is a lax $\c{SM}_{\c{T}}$-algebra morphism. Since $(\partial_n)_x$ is $G \times \Sigma_n$-equivariant in $x$ (axiom (ii)), it follows that $F_e = (\partial_0)_e : e^{\s{D}} \to F(e^{\s{C}})$ is $G$-fixed, $F_\otimes = (\partial_2)_{\otimes} : \otimes^{\s{D}} \circ F^{\times 2} \Rightarrow F \circ \otimes^{\s{C}}$ is $G$-natural, and for any $i \in I$, $F_{\btn_{T_i}} = (\vd_{|T_i|})_{\btn_{T_i}} : \btn_{T_i}^{\s{D}} \circ F^{\times \abs{T_i}} \Rightarrow F \circ \btn_{T_i}^{\s{C}}$ is $H_i$-natural. To get the lax $\c{T}$-monoidal diagrams relating coherence isomorphisms for $\s{C}$ and $\s{D}$, we use axioms (i), (iii), and (iv). For example, the naturality square for $\alpha \in \c{SM}_{\c{T}}(3)$ is
	\[
	\begin{tikzpicture}
		\node(1) at (0,0) {$\otimes^{\s{D}} \circ (\otimes^{\s{D}} \times \t{id}) \circ F^{\times 3}$};
		\node(2) at (0,-1.5) {$\otimes^{\s{D}} \circ F^{\times 2} \circ (\otimes^{\s{C}} \times \t{id})$};
		\node(3) at (0,-3) {$F \circ \otimes^{\s{C}} \circ (\otimes^{\s{C}} \times \t{id})$};
		\node(4) at (6,0) {$\otimes^{\s{D}} \circ (\t{id} \times \otimes^{\s{D}}) \circ F^{\times 3}$};
		\node(5) at (6,-1.5) {$\otimes^{\s{D}} \circ F^{\times 2} \circ (\t{id} \times \otimes^{\s{C}})$};
		\node(6) at (6,-3) {$F \circ \otimes^{\s{C}} \circ (\t{id} \times \otimes^{\s{C}})$};
		\path[->]
		(1) edge [above] node {$\alpha^{\s{D}} \circ F^{\times 3}$} (4)
		(3) edge [below] node {$F \circ \alpha^{\s{C}}$} (6)
		(1) edge [left] node {$\otimes^{\s{D}} \circ (F_\otimes \times \t{id})$} (2)
		(2) edge [left] node {$F_\otimes \circ (\otimes^{\s{C}} \times \t{id})$} (3)
		(4) edge [right] node {$\otimes^{\s{D}} \circ (\t{id} \times F_\otimes)$} (5)
		(5) edge [right] node {$F_\otimes \circ (\t{id} \times \otimes^{\s{C}})$} (6)
		;
	\end{tikzpicture}
	\]
and this is the usual relation for associators. The other relations are similar, which means $\t{ev}F = (F,F_\bullet) : \t{ev}\s{C} \to \t{ev}\s{D}$ is a lax $\c{T}$-monoidal functor.

Finally, we consider $2$-morphisms. If $\omega : (F,\vd_\bullet) \Rightarrow (F',\vd_\bullet')$ is a $\c{SM}_{\c{T}}$-transformation, then specializing the relation in Definition \ref{defn:Otransfm} to $x = \otimes$ and evaluating at $C,C' \in \s{C}$ gives the square below.
	\[
	\begin{tikzpicture}
		\node(1) at (0,0) {$FC \otimes^{\s{D}} FC'$};
		\node(2) at (0,-1.5) {$F(C \otimes^{\s{C}} C')$};
		\node(3) at (4,0) {$F'C \otimes^{\s{D}} F'C'$};
		\node(4) at (4,-1.5) {$F'(C \otimes^{\s{C}} C')$};
		\path[->]
		(1) edge [left] node {$F_\otimes$} (2)
		(3) edge [right] node {$F'_\otimes$} (4)
		(1) edge [above] node {$\omega \otimes^{\s{D}} \omega$} (3)
		(2) edge [below] node {$\omega$} (4)
		;
	\end{tikzpicture}
	\]
This is the usual relation between $F_\otimes$ and $F'_\otimes$. Similarly for the pairs $F_e$ and $F'_e$, and $F_{\btn_{T_i}}$ and $F'_{\btn_{T_i}}$. Therefore $\omega : (F,F_\bullet) \Rightarrow (F',F'_\bullet)$ is a $\c{T}$-transformation.
\end{proof}

Finally, we can state the coherence theorem for NSMCs.

\begin{thm}\label{thm:coh}Let $\c{T}$ be a set of exponents. There is a commutative triangle
	\[
	\begin{tikzpicture}
		\node(A) at (0,0) {$\c{SM}_{\c{T}}\t{-}\b{AlgLax}$};
		\node(B) at (4,0) {$\c{T}\b{SMLax}$};
		\node(C) at (2,-2) {$\ub{Cat}^G$};
		\path[->]
		(A) edge [above] node {$\t{ev}$} (B)
		(A) edge [below left] node {$\t{forget}$} (C)
		(B) edge [below right] node {$\t{forget}$} (C)
		;
	\end{tikzpicture}
	\]
of $2$-categories and $2$-functors, and the evaluation $2$-functor $\t{ev}$ is an isomorphism. Similarly in the strong and strict cases.
\end{thm}

\begin{proof} Combine Propositions \ref{prop:evbijobj}, \ref{prop:evbijfun}, and \ref{prop:evbijtrans} in \S\ref{sec:pfcoh}.
\end{proof}

As a consequence of this theorem, it follows that NSMCs model $N_\infty$ spaces. 

\begin{thm}\label{thm:NSMCmodel} Let $\c{T}$ be a set of exponents and let $\s{C}$ be a $\c{T}$-NSMC. Then the classifying space $B\s{C}$ is an algebra over the $N_\infty$ operad $B\c{SM}_{\c{T}}$. Moreover, the class of admissible sets of $B\c{SM}_{\c{T}}$ is the indexing system generated by $\c{T}$.
\end{thm}

We review a few definitions, and then we prove this result. Suppose $\s{O}$ is an operad in $G$-spaces. Then $\s{O}$ is a \emph{$N_\infty$ operad} if it is $\Sigma$-free, the subspace $\s{O}(n)^{\Gamma}$ is either empty or contractible for all subgroups $\Gamma \subset G \times \Sigma_n$, and all of the subspaces $\s{O}(n)^G$ are nonempty. We say that a finite $H$-set $T$ is an \emph{admissible set} of $\s{O}$ if $\s{O}(\abs{T})^{\Gamma(T)}$ is nonempty, where $\Gamma(T)$ denotes the graph subgroup corresponding to a permutation representation of $T$. The class of all admissible sets of a $N_\infty$ operad assemble into an \emph{indexing system}, i.e. a collection that contains all trivial actions, and is closed under isomorphism, conjugation, restriction, subobjects, coproducts, and self-induction (cf. \cite{BH}).

\begin{proof}[Proof of Theorem \ref{thm:NSMCmodel}] Suppose $\s{C}$ is $\c{T}$-NSMC. Then by Theorem \ref{thm:coh}, there is a $\c{SM}_{\c{T}}$-algebra structure on $\s{C}$ that evaluates to the given $\c{T}$-NSMC structure. Since the classifying space functor $B$ preserves finite products, it follows that $B\s{C}$ is an algebra over $B\c{SM}_{\c{T}}$.

The operad $F(S_{\c{T}}) = \t{Ob}(\c{SM}_{\c{T}})$ is a $N$ operad in the sense of \cite[Definition 3.1]{RubComb}, i.e. it is $\Sigma$-free and $F(S_{\c{T}})(n)^G \neq \varnothing$ for all $n$. The former assertion holds because $F(S_{\c{T}})$ maps into the $\Sigma$-free operad $\b{Set}(G,\b{As})$ and the latter holds because the unit induces a map $\eta : S_{\c{T}}(n)^G \to F(S_{\c{T}})(n)^G$ for every $n \geq 0$. Since $\til{(-)}$ sends every nonempty set to a contractible groupoid, the composite $B \circ \til{(-)}$ sends every nonempty set to a contractible space. Therefore $B\c{SM}_{\c{T}}$ is a $N_\infty$ operad.

Admissible sets for $N$ operads are defined exactly as for $N_\infty$ operads, and the functor $B \circ \til{(-)}$ preserves admissible sets (loc. cit. Proposition 3.5). Therefore $B\c{SM}_{\c{T}}$ has the same admissible sets as $F(S_{\c{T}})$. By \cite[Theorem 4.6]{RubComb}, the class of admissible sets of $F(S_{\c{T}})$ is the indexing system generated by the admissibles of $S_{\c{T}}$. This is the indexing system generated by $\c{T}$ because indexing systems are closed under conjugation and restriction.
\end{proof}

\section{The proof of the coherence theorem}\label{sec:pfcoh}

In this section, we prove Theorem \ref{thm:coh} by showing that the evaluation $2$-functor
	\[
	\t{ev} : \c{SM}_{\c{T}}\t{-}\b{AlgLax} \to \c{T}\b{SMLax}
	\]
is bijective on objects, $1$-morphisms, and $2$-morphisms. We begin by recalling a more precise description of $F(S_{\c{T}}) = \t{Ob}(\c{SM}_{\c{T}})$, which will be necessary for our analysis.

\subsection{The operad $\c{SM}_{\c{T}}$, again}\label{subsec:SMTdesc} In order to understand the structure of $\c{SM}_{\c{T}}$, we need to understand the combinatorics of its object operad, i.e. the free discrete operad $F(S_{\c{T}})$ on the $\Sigma$-free symmetric sequence $S_{\c{T}}$. As explained in \cite[\S5]{RubComb}, such operads have a description in terms of formal composites, which we now review.

Let $\c{T} = (T_i)_{i \in I}$ be a set of exponents, where each $T_i$ is a finite, ordered $H_i$-set. To start the construction of $\c{SM}_{\c{T}}$, choose a set $\{e = g_1^H , \dots , g_{\abs{G:H}}^H \}$ of $G/H$ coset representatives for each subgroup $H \subset G$. Next, consider the letters and punctuation symbols
	\begin{align*}
		x_n	&\quad(n=1,2,\dots)	\\
		e	&	\\
		\tn	&	\\
		r\sbtn_{T_i}	&\quad(\t{$i \in I$ and $r$ a $G/H_i$ coset representative})	\\
		(	\quad	)	\quad	,	&\quad(\t{punctuation})
	\end{align*}
A \emph{word} is a finite sequence $w = l_1 \dots l_n$ of the symbols above. A \emph{subword} of a word $w$ is a sequence that is either empty, or is of the form $l_j \dots l_k$ for $j \leq k$.

A \emph{term} is any word constructed by the following recursion
	\begin{enumerate}
		\item{}every variable $x_n$ is a term,
		\item{}the word $e()$ is a term,
		\item{}if $t_1,t_2$ are terms, then $\otimes(t_1,t_2)$ is a term, and
		\item{}if $t_1 , \dots , t_{\abs{T_i}}$ are terms, then $r\btn_{T_i}(t_1,\dots,t_{\abs{T_i}})$ is a term.
	\end{enumerate}
A \emph{subterm} of a term $t$ is a subword that is also a term. The \emph{arity} of a term $t$ is the number of distinct variable symbols that appear in $t$. A $n$-ary term $t$ is \emph{operadic} if each of the variables $x_1,\dots,x_n$ appears in $t$ exactly once.

\begin{defn}The $n$th level of the free operad $F(S_{\c{T}})$ is the set of all $n$-ary operadic terms. The operad $\c{SM}_{\c{T}}$ is obtained by applying the codiscrete functor $\til{(-)} : \b{Set} \to \b{Cat}$ levelwise.
\end{defn}

Next, we review the operad structure on $F(S_{\c{T}})$. If $t$ is a $n$-ary operadic term and $\sigma \in \Sigma_n$, the term $t \cdot \sigma$ is obtained by replacing each variable $x_i$ in $t$ with $x_{\sigma^{-1}i}$.

If $g \in G$, then $g * t$ is defined recursively (on all terms) by
	\begin{enumerate}
		\item{}$g * x_i = x_i$,
		\item{}$g * e() = e()$,
		\item{}$g * \otimes(t_1,t_2) = \otimes(g*t_1 , g*t_2)$, and
		\item{}$g * r\btn_{T_i}(t_1,\dots,t_{\abs{T_i}}) = r' \btn_{T_i}(g*t_{\sigma(h)^{-1}1} , \dots , g* t_{\sigma(h)^{-1}\abs{T_i}} )$, where $gr = r' h$ for a unique $G/H_i$ coset representative $r'$, $h \in H_i$, and $(h,\sigma(h)) \in \Gamma(T_i)$.
	\end{enumerate}
The point is that $g * (-)$ is supposed to be conjugation, so we should just multiply all function symbols in $t$ with $g$. However, $gr\bigotimes_{T_i}$ is not generally a letter in our alphabet, so we must modify the result.

The identity of $F(S_{\c{T}})$ is $x_1$.

The composite $\gamma(t;s_1,\dots,s_k)$ of $t \in F(S_{\c{T}})(k)$ with $s_1 \in F(S_{\c{T}})(j_1)$, \dots , $s_k \in F(S_{\c{T}})(j_k)$ is computed by
	\begin{enumerate}
		\item{}adding $j_1 + \cdots j_{i-1}$ to each subscript of $s_i$ -- call the result $s_i'$ -- and then
		\item{}substituting the terms $s_i'$ for the variables $x_i$ in $t$.
	\end{enumerate}
Here, the point is that substituting the $s_i$ directly leads to repeated variables, so we shift indices beforehand.

These data make $F(S_{\c{T}})$ into an operad in $\b{Set}^G$, which is free on $S_{\c{T}}$. Recalling Notation \ref{nota:eltFS}, the unit $\eta : S_{\c{T}} \to F(S_{\c{T}})$ sends $e$ to $e()$, $\otimes$ to $\otimes(x_1,x_2)$, and $\btn_{T_i}$ to $\btn_{T_i}(x_1,\dots,x_{\abs{T}})$. The rest is determined by equivariance.

Now we can begin the proof of Theorem \ref{thm:coh}.

\subsection{Bijectivity of $\t{ev}$ objects}

\begin{lem}\label{lem:evobjinj} The evaluation $2$-functor is injective on objects.
\end{lem}

\begin{proof}Suppose $\abs{\cdot}_1 , \abs{\cdot}_2 : \c{SM}_{\c{T}} \to \b{End}(\s{C})$ are $\c{SM}_{\c{T}}$-algebra structures on $\s{C}$ that evaluate to the same NSMC structure. Then $\abs{\cdot}_1$ and $\abs{\cdot}_2$ agree on $e()$, $\otimes(x_1,x_2)$, and $\btn_{T_i}(x_1,\dots,x_{\abs{T_i}}) \in \c{SM}_{\c{T}}$ by assumption, and hence they agree on all objects of $\c{SM}_{\c{T}}$ because $\t{Ob}(\c{SM}_{\c{T}}) = F(S_{\c{T}})$ is free on these operations. Similarly, $\abs{\cdot}_1$ and $\abs{\cdot}_2$ agree on $\alpha,\lambda,\rho,\beta,\upsilon_{T_i} \in \c{SM}_{\c{T}}$ by assumption, and hence they agree on all morphisms of $\c{SM}_{\c{T}}$, because the preceding morphisms and their inverses generate all morphisms of $\c{SM}_{\c{T}}$ under the operad and category structures. 
\end{proof}

Now for surjectivity. Given a $\c{T}$-NSMC
	\[
	\Big( \s{C},\tn^{\s{C}},e^{\s{C}},(\sbtn_{T_i}^{\s{C}})_{i \in I} , \alpha^{\s{C}},\lambda^{\s{C}},\rho^{\s{C}},\beta^{\s{C}},(\ups^{\s{C}}_{T_i})_{i \in I} \Big) ,
	\]
we must find an operad map $\abs{\cdot} : \c{SM}_{\c{T}} \to \b{End}(\s{C})$ that evaluates back to these generating data. We follow Mac Lane \cite[Ch. VII.2]{CWM} closely and encourage the reader to review those arguments before proceeding.

The main difference between our situation and Mac Lane's is the presence of external norms and untwistors, but these additional data barely interact with the ordinary symmetric monoidal structure. Thus, our strategy is to separate out the new data from the old, thus reducing to Mac Lane's classical theorem. This is similar to how one deduces joint coherence for associators and unitors from the coherence of associators alone.

The remainder of this section gives a three-stage construction of the necessary operad map $\abs{\cdot} : \c{SM}_{\c{T}} \to \b{End}(\s{C})$.

\subsubsection{Step 1:} Recall Notation \ref{nota:eltFS}. We define a map
	\[
	S_{\c{T}} \to \t{Ob}(\b{End}(\s{C}))
	\]
of symmetric sequences by sending each symbol $e, \otimes, \btn_{T_i} \in S_{\c{T}}$ to the corresponding operations $e^{\s{C}}() , \otimes^{\s{C}}(x_1,x_2) , \btn^{\s{C}}_{T_i}(x_1,\dots,x_{\abs{T_i}}) \in \b{End}(\s{C})$. By adjunction, this extends freely to an operad map
	\[
	\abs{\cdot}_1 : \t{Ob}(\c{SM}_{\c{T}}) = F(S_{\c{T}}) \to \t{Ob}(\b{End}(\s{C})),
	\]
which sends $e(), \otimes(x_1,x_2), \btn_{T_i}(x_1,\dots,x_{\abs{T_i}}) \in F(S_{\c{T}})$ to the corresponding operations on $\s{C}$.

\subsubsection{Step 2:}\label{subsubsec:step2const} Next, let $n \geq 0$, and consider the set map
	\[
	\abs{\cdot}_1 : F(S_{\c{T}})(n) \to \t{Ob}(\b{Cat}_G(\s{C}^{\times n} , \s{C})).
	\]
We shall extend this to a graph homomorphism, and then to a functor.

\begin{defn}\label{defn:basicedge} Let $\c{T} = (T_i)_{i \in I}$ be a set of exponents. An \emph{$\eta$-basic edge} is a tuple
	\[
	b = (t,j,\eta,s_1,\dots,s_k,\sigma)
	\]
such that
	\begin{enumerate}
		\item{} $t$ is an operadic term of positive arity,
		\item{} $j$ is a natural number between $1$ and $\t{arity}(t)$,
		\item{}$\eta = \alpha^{\pm 1}$ or $\lambda^{\pm 1}$ or $\rho^{\pm 1}$ or $\beta$ or $r\ups_{T_i}^{\pm 1}$ for some index $i \in I$ and $G/H_i$ coset representative $r$ (cf. Notation \ref{nota:eltFS}),
		\item{}$s_1, \dots, s_k$ are operadic terms, where $k$ is the arity of $\eta$, and
		\item{}$\sigma$ is a permutation in $\Sigma_N$, where $N = \t{arity}(t) + \t{arity}(s_1) + \cdots + \t{arity}(s_k) - 1$.
	\end{enumerate}
We call the number $N$ in (5) the \emph{arity} of $b$, and we shall usually shorten ``$r\ups_{T_i}^{\pm 1}$-basic edge''  to ``$\ups^{\pm 1}$-basic edge.''

Let $\gamma$ denote operadic composition and let $\circ_j$ denote partial composition. The \emph{source} and \emph{target} of the basic edge $b = (t,j,\eta,s_1,\dots,s_k,\sigma)$ are the operadic terms
	\[
	t \circ_j \gamma(\t{dom}(\eta);s_1,\dots,s_k) \cdot \sigma \quad\t{and}\quad t \circ_j \gamma(\t{cod}(\eta);s_1,\dots,s_k) \cdot \sigma
	\]
in $F(S_{\c{T}})$, and the \emph{interpretation} of $b$ is the natural isomorphism
	\[
	\abs{b}_2 := \abs{t}_1 \circ_j \gamma(\eta^{\s{C}}; \abs{s_1}_1 , \dots , \abs{s_k}_1 ) \cdot \sigma : \abs{\t{source}(b)} \Rightarrow \abs{\t{target(b)}}
	\]
in the endomorphism operad $\b{End}(\s{C})$.
\end{defn}
	
\begin{nota} Let $\b{Bas}(n)$ be the directed graph, whose vertex set is $F(S_{\c{T}})(n)$, and whose edges are the $n$-ary basic edges. Let $\b{Fr}(\b{Bas}(n))$ be the free category generated by $\b{Bas}(n)$.
\end{nota}

We extend the set map $\abs{\cdot}_1 : F(S_{\c{T}})(n) \to \t{Ob}(\b{Cat}_G(\s{C}^{\times n} , \s{C}))$ to a graph homomorphism
	\[
	\b{Bas}(n) \to \b{Cat}_G(\s{C}^{\times n} , \s{C})
	\]
by sending each basic edge to its interpretation in $\b{End}(\s{C})$. This graph homomorphism freely extends to a functor
	\[
	\abs{\cdot}_2 : \b{Fr}(\b{Bas}(n)) \to \b{Cat}_G(\s{C}^{\times n} , \s{C}).
	\]

\subsubsection{Step 3:} For any objects $x,y \in \b{Fr}(\b{Bas}(n))$, there are morphisms $x,y \rightrightarrows \otimes( \cdots \otimes(\otimes(x_1,x_2),x_3) \cdots, x_n)$, and hence a morphism $x \to y$. Therefore the quotient of $\b{Fr}(\b{Bas}(n))$ by the congruence relation
	\[
	p \sim q	\quad\t{if and only if}\quad	p \t{ and } q \t{ have the same domain and codomain}
	\]
is isomorphic to the category $\c{SM}_{\c{T}}(n)$. We shall prove that
	\[
	\abs{\cdot}_2 : \b{Fr}(\b{Bas}(n)) \to \b{Cat}_G(\s{C}^{\times n} , \s{C})
	\]
descends to a functor
	\[
	\abs{\cdot} = \abs{\cdot}_3 : \c{SM}_{\c{T}}(n) \cong \b{Fr}(\b{Bas}(n)) / \!\! \sim
	\,\, \to
	\b{Cat}_G(\s{C}^{\times n} , \s{C})
	\]
for every $n \geq 0$. This is one form of the coherence theorem.

\begin{lem}\label{lem:nubasicsw}Suppose that $t_1 \stackrel{e}{\to} t_2 \stackrel{u}{\to} t_3$ are basic edges in $\b{Bas}(n)$, and
	\begin{enumerate}[label=(\alph*)]
		\item{}the edge $e$ is $\ve$-basic, where $\ve$ is one of $\alpha^{\pm 1}$, $\lambda^{\pm 1}$, $\rho^{\pm 1}$, or $\beta$, and
		\item{}the edge $u$ is $\upsilon$-basic.
	\end{enumerate}
Then there is a composable pair of basic edges $t_1 \stackrel{u'}{\to} t_2' \stackrel{e'}{\to} t_3$ such that
	\begin{enumerate}[label=(\roman*)]
		\item{}the edge $e'$ is $\ve$-basic,
		\item{}the edge $u'$ is $\upsilon$-basic, and
		\item{} $\abs{e'}_2 \vc \abs{u'}_2 = \abs{u}_2 \vc \abs{e}_2$.
	\end{enumerate}
In (iii), $\bullet$ denotes (vertical) composition in $\b{End}(\s{C})$.
\end{lem}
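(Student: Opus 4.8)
The statement is an ``interchange'' or commutation result: an $\varepsilon$-basic edge followed by a $\upsilon$-basic edge can be rewritten as a $\upsilon$-basic edge followed by an $\varepsilon$-basic edge, with the same interpretation. The key point is that the two modifications happen at disjoint (or nested, but non-interacting) locations in the tree, because a single instance of $\upsilon_T$ only touches a corolla labeled by some $g_i^H\btn_T$, whereas a single instance of $\alpha^{\pm 1}$, $\lambda^{\pm 1}$, $\rho^{\pm 1}$, or $\beta$ only touches a sub-configuration of $\tn$'s and $e$'s. First I would set up notation by writing out the source and target trees of $e$ and $u$ explicitly in terms of $\gamma$, using the tuple presentation from Definition~\ref{defn:basicedge}: say $e = ((t,t'),(u_1,\dots,u_k),s,i,\sigma)$ with $\varepsilon=(t,t')$, and $u = ((p,p'),(v_1,\dots,v_\ell),s'',j,\tau)$ with $(p,p')$ a $\upsilon$-basic irreducible edge. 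The hypothesis that $r\xrightarrow{e}s\xrightarrow{u}t$ is composable means $\t{target}(e)=\t{source}(u)$, i.e. the single $\upsilon$-modification of $u$ is applied to some subtree of the tree $t'$-modified-by-$\varepsilon$.

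The heart of the argument is a case analysis on where the $\upsilon$-redex of $u$ sits relative to the $\varepsilon$-redex of $e$ inside the common tree $s = \t{target}(e)$. There are essentially three cases: (1) the $\upsilon$-redex lies entirely inside one of the ``context'' pieces (in the subtrees $u_1,\dots,u_k$ grafted above, or in $s$ below, or in a disjoint branch), so the two edges are literally ``parallel'' in disjoint positions — here one simply reorders the two applications of $\delta$, and the equality of interpretations is the middle-four interchange law for horizontal composition in the $2$-category structure on $\ub{Cat}_G(\s{C}^{\times\bullet},\s{C})$, i.e. functoriality of operadic composition in the endomorphism operad. (2) The $\upsilon$-redex lies strictly above the vertex touched by $\varepsilon$; since $\varepsilon$ only involves the symbols $\tn$ and $e$, and $\upsilon$ changes a $g_i^H\btn_T$ into $\btn_{|T|}$, the $\btn_T$-labeled vertex of the $\upsilon$-redex must be one of the grafted subtrees $u_m$ (or inside one), so again the redexes are disjoint and case (1) applies. (3) The only genuinely interactive possibility would be the $\upsilon$-redex lying strictly below the $\varepsilon$-vertex — but then the root vertex of the $\varepsilon$-redex, which is a $\tn$, would have to be one of the arguments fed into the $g_i^H\btn_T$ corolla, again making the redexes nested-but-disjoint. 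In every case the conclusion reduces to: disjoint rewrites commute, and commuting disjoint rewrites preserve interpretation by the interchange law. So I would state a short combinatorial sublemma — ``if a $\upsilon$-basic edge and an $\varepsilon$-basic edge with $\varepsilon\in\{\alpha^{\pm1},\lambda^{\pm1},\rho^{\pm1},\beta\}$ are composable, their redexes are non-overlapping'' — prove it by noting the disjoint alphabets ($\{\tn,e\}$ versus $\{\btn_T\}$), and then obtain $u'$ and $e'$ by swapping the order of the two $\delta$-composites, with $\t{source}(u')=r$ automatically.

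The verification that $\abs{e'\circ u'}=\abs{u\circ e}$ is then a bookkeeping exercise: expand both sides using the formula in Definition~\ref{defn:basicedge}, and apply the identities $\delta(\t{id};\dots)=\t{id}$, the interchange law $\delta(f\bullet f';g\bullet g',\dots)=\delta(f;g,\dots)\bullet\delta(f';g',\dots)$ for $\delta$, and the equivariance/naturality of $\delta$ in $\ub{Cat}_G(\s{C}^{\times\bullet},\s{C})$ to slide the two whiskered natural isomorphisms past each other. The permutations $\sigma,\tau$ and the new permutations for $u',e'$ are determined by grafting-and-relabeling free leaf nodes in lexicographic order, exactly as in the free-operad model of $\bb{F}(S_\c{N})$; I would simply record which permutation arises and defer the detailed check, since it is the same computation one performs to verify the operad axioms for $\bb{F}(S_\c{N})$.

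\textbf{Main obstacle.} The conceptual content is easy — disjoint rewrites commute — but the notational overhead is the real difficulty: tracking the exact decomposition data $(s,i,\sigma)$ and the grafted subtrees through the swap, and in particular getting the free-leaf-node reorderings and the resulting permutation right so that the rewritten pair is genuinely a \emph{composable} pair of basic edges in the precise sense of the definition, not merely ``morally'' the same. I expect the bulk of the writing to go into a clean statement of the non-overlapping sublemma and into choosing notation that makes the swap transparent; once that is in place, clause (iii) follows from the interchange law with essentially no creativity required.
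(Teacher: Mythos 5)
Your proposal is correct and follows essentially the same route as the paper: the key point in both is that the $\ve$-redex involves only the symbols $\tn$ and $e$ while the $\upsilon$-redex is a norm-labeled vertex, so the two rewrites are disjoint, and functoriality of operadic composition (the interchange law) in $\ub{Cat}_G(\s{C}^{\times \bullet},\s{C})$ lets you paste the two $2$-cells in either order. The paper simply asserts this in a few lines without your explicit case analysis or the bookkeeping of decomposition data, which it likewise leaves implicit.
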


For example, given a binary external norm $\boxtimes$ and basic edges
	\[
	\boxtimes(x_2,\otimes(x_1,e())) \stackrel{e}{\to} \boxtimes(x_2,x_1) \stackrel{u}{\to} \otimes(x_2,x_1),
	\]
the interchanged edges are
	\[
	\boxtimes(x_2,\otimes(x_1,e())) \stackrel{u'}{\to} \otimes(x_2,\otimes(x_1,e())) \stackrel{e'}{\to} \otimes(x_2,x_1),
	\]
and the equation
	\[
	(x_2 \otimes \rho_{x_1}) \circ \ups_{x_2, x_1 \otimes e} = \ups_{x_2,x_1} \circ (x_2 \boxtimes \rho_{x_1})
	\]
holds by the naturality of $\ups$.

\begin{proof}[Proof of of Lemma \ref{lem:nubasicsw}]The edge $e$ replaces some subterm of $t_1$ of the form
	\[
	\otimes(\otimes(s_1,s_2),s_3) \t{ or } \otimes(e(),s_1) \t{ or } \otimes(s_1,e()) \t{ or } \dots
	\]
with another subterm. The edge $u$ replaces some subterm of $t_2$ of the form
	\[
	r \btnt(s_1,\dots,s_{\abs{T_i}})
	\]
with $\otimes(\cdots \otimes ( \otimes ( s_1 , s_2 ), s_3) \cdots s_{\abs{T_i}})$. These substitutions commute, and switching the order yields the edges $u'$ and $e'$. The composite $\abs{u}_2 \bullet \abs{e}_2$ is obtained by pasting two disjoint natural transformations on to $\abs{t_1}_2$. The composite $\abs{e'}_2 \bullet \abs{u'}_2$ is obtained by pasting the same two natural transformations in the opposite order. Therefore $\abs{u}_2 \bullet \abs{e}_2 = \abs{e'}_2 \bullet \abs{u'}_2$.
\end{proof}

Next, we consider a special case of the coherence theorem. 

\begin{defn} A morphism $p$ in $\b{Fr}(\b{Bas}(n))$ is \emph{$\ups$-directed} if $p$ decomposes into a composite $p = b_k \vc \cdots \vc b_1$ of basic edges, none of which are $\ups^{-1}$-basic. Here $\bullet$ denotes composition in $\b{Fr}(\b{Bas}(n))$.
\end{defn}

Recall that $\btn_n(x_1,\dots,x_n)$ is shorthand for $ \tn(\cdots\tn(\tn(x_1,x_2),x_3) \cdots , x_n)$ (cf. Notation \ref{nota:eltFS}).

\begin{lem}\label{lem:nudir}Suppose that $t \in \b{Fr}(\b{Bas}(n))$, and that $p , q : t \rightrightarrows \btn_n(x_1,\dots,x_n)$ are $\upsilon$-directed. Then the natural transformations $\abs{p}_2 , \abs{q}_2 : \abs{t}_2 \rightrightarrows \abs{\btn_n(x_1,\dots,x_n)}_2$ are equal in $\b{End}(\s{C})$.
\end{lem}

\begin{proof}Consider $p$ alone first. We may write $p = b_k \vc \cdots \vc b_1$ for unique basic edges, none of which are $\ups^{-1}$-basic, and then $\abs{p}_2$ is equal to the vertical composite $\abs{b_k}_2 \vc \cdots \vc \abs{b_1}_2$. By applying Lemma \ref{lem:nubasicsw} repeatedly, we may move all images of $\upsilon$-basic edges to the right. Let $t^{\t{red}}$ be the term obtained by replacing every subterm $s \subset t$ of the form $s = r \btn_T(s_1,\dots,s_{|T_i|})$ with the term $\tn(\cdots \tn(\tn(s_1,s_2),s_3) \cdots s_{|T_i|})$. Then the above interchange yields morphisms $p_{\ups} : t \to t^{\t{red}}$ and $p_s : t^{\t{red}} \to \btn_n(x_1,\dots,x_n)$ in $\b{Fr}(\b{Bas}(n))$ such that
	\begin{enumerate}[label=(\alph*)]
		\item{}$\abs{p}_2 = \abs{p_s}_2 \vc \abs{p_\upsilon}_2$,
		\item{}$p_\upsilon$ is a composite of $\upsilon$-basic edges only, and
		\item{}$p_s$ is a composite of ordinary symmetric monoidal basic edges.
	\end{enumerate}
Now do the same thing for $q$. We obtain parallel pairs of maps $p_\upsilon , q_\upsilon : t \rightrightarrows t^{\t{red}}$ and $p_s , q_s : t^{\t{red}} \rightrightarrows \btn_n (x_1 , \dots, x_n)$ such that $\abs{p}_2 = \abs{p_s}_2 \vc \abs{p_\upsilon}_2$ and $\abs{q}_2 = \abs{q_s}_2 \vc \abs{q_\upsilon}_2$.

The ordinary Mac Lane coherence theorem implies that $\abs{p_s}_2 = \abs{q_s}_2$, since $\abs{p_s}_2$ and $\abs{q_s}_2$ come from the underlying symmetric monoidal structure on $\s{C}$. For $p_\ups$ and $q_\ups$, write $p_\ups = u_n \bullet \cdots \bullet u_1$ where each $u_i$ is $\upsilon$-basic. Each $u_i$ replaces a single instance of $r \btn_T(s_1,\dots,s_{\abs{T_i}})$ in $t$ with $\tn(\cdots \tn(\tn(s_1,s_2),s_3) \cdots s_{|T_i|})$. Therefore the composite $\abs{p_\ups} = \abs{u_n}_2 \bullet \cdots \bullet \abs{u_1}_2$ is obtained by pasting $n$ disjoint untwistors on to $\abs{t}_2$. We may paste these transformations in any order without changing the value of $\abs{p_\ups}_2$. On the other hand, $\abs{q_\ups}_2$ is obtained by pasting the same $n$ untwistors on to $\abs{t}_2$, possibly in a different order, and hence $\abs{p_\ups}_2 = \abs{q_\ups}_2$. 
\end{proof}

Now for the general case.

\begin{prop} Suppose that $t , t' \in \b{Fr}(\b{Bas}(n))$ and that $p : t \to t'$ is arbitrary. Choose $\upsilon$-directed morphisms $d : t \to \btn_n (x_1,\dots,x_n)$ and $d' : t' \to \btn_n (x_1,\dots,x_n)$. Then $\abs{p}_2 = \abs{d'}^{-1}_2 \vc \abs{d}_2 : \abs{t}_2 \to \abs{\btn_n(x_1,\dots,x_n)}_2 \to \abs{t'}_2$, where $\bullet$ denotes vertical composition.
\end{prop}

\begin{proof} Consider  $p = b_k \vc \cdots \vc b_1 : t \to t'$, where the $b_i$ are basic edges. By regrouping the factors, we may write
	\[
	p = \Big[ t = t_0 \stackrel{s_0}{\to} t_1 \stackrel{u_1}{\to} t_2 \stackrel{s_1}{\to} t_3 \stackrel{u_2}{\to} t_4 \to \cdots \to t_{2j-1} \stackrel{u_j}{\to} t_{2j} \stackrel{s_j}{\to} t_{2j+1} = t' \Big]
	\]
where each $u_i$ is $\upsilon^{\pm 1}$-basic, and no instances of $\upsilon^{\pm 1}$-basic edges occur in the $s_i$. Applying $\abs{\cdot}_2$ and replacing each $\upsilon^{-1}$-basic edge with its reverse, we see
	\[
	\abs{p}_2 = \Big[ \abs{t_0}_2 \stackrel{\abs{s_0}_2}{\to} \abs{t_1}_2 \stackrel{\abs{u_1}^{\pm 1}_2}{\to} \abs{t_2}_2 \to \cdots \to \abs{t_{2j-1}}_2 \stackrel{\abs{u_j}^{\pm 1}_2}{\to} \abs{t_{2j}}_2 \stackrel{\abs{s_j}_2}{\to} \abs{t_{2j+1}}_2 \Big]
	\]
where each of the $u_i$ are now $\upsilon$-basic edges, possibly pointing backwards. Now, for each $t_i$, choose a $\upsilon$-directed morphism $d_i : t_i \to \btn_n (x_1,\dots,x_n)$, taking $d_0 = d$ and $d_{2j + 1} = d'$ from the theorem statement. We obtain a diagram
	\begin{center}
		\begin{tikzpicture}[scale=1.4]
			\node(0) {$\abs{t_0}_2$};
			\node(1) at (1,0) {$\abs{t_1}_2$};
			\node(2) at (2,0) {$\abs{t_2}_2$};
			\node(3) at (3,0) {$\abs{t_3}_2$};
			\node(dots) at (4,0) {$\cdots$};
			\node(2j-1) at (5.25,0) {$\abs{t_{2j-1}}_2$};
			\node(2j) at (6.6,0) {$\abs{t_{2j}}_2$};
			\node(2j+1) at (8,0) {$\abs{t_{2j+1}}_2$};
			\node(r) at (4,-1.5) {$\abs{\btn_n(x_1,\dots,x_n)}_2$};
			\node(dots') at (4,-0.4) {$\cdots$};
			\path[->]
			(0) edge [above] node {$\abs{s_0}_2$} (1)
			(1) edge [above] node {$\abs{u_1}^{\pm 1}_2$} (2)
			(2) edge [above] node {$\abs{s_1}_2$} (3)
			(3) edge node {} (dots)
			(dots) edge node {} (2j-1)
			(2j-1) edge [above] node {$\abs{u_j}^{\pm 1}_2$} (2j)
			(2j) edge [above] node {$\abs{s_j}_2$} (2j+1)
			
			(0) edge node {} (r)
			(1) edge node {} (r)
			(2) edge node {} (r)
			(3) edge node {} (r)
			(2j-1) edge node {} (r)
			(2j) edge node {} (r)
			(2j+1) edge node {} (r)
			;
		\end{tikzpicture}
	\end{center}
in $\b{End}(\s{C})(n)$, where the diagonal map $\abs{t_i}_2 \to \abs{\btn_n (x_1,\dots,x_n)}_2$ is $\abs{d_i}_2$. Every triangle commutes by Lemma \ref{lem:nudir}, and hence $\abs{p}_2$ is equal to the vertical composite $\abs{d_{2j+1}}^{-1}_2 \vc \abs{d_0}_2 = \abs{d'}^{-1}_2 \vc \abs{d}_2$.
\end{proof}

It follows that for any two $p,q : t \rightrightarrows t'$ in $\b{Fr}(\b{Bas}(n))$, we have $\abs{p}_2 = \abs{q}_2$, and the next statement follows.

\begin{cor} The functor $\abs{\cdot}_2 : \b{Fr}(\b{Bas}(n)) \to \b{End}(\s{C})(n)$ factors through the quotient $\pi: \b{Fr}(\b{Bas}(n)) \to \b{Fr}(\b{Bas}(n)) / \langle p \sim q \, | \, p, \, q \t{ parallel} \rangle$.
\end{cor}

\begin{defn}\label{defn:abs3} For every $n \geq 0$, let
	\[
	\abs{\cdot} = \abs{\cdot}_3 : \c{SM}_{\c{T}}(n) \to \b{End}(\s{C})(n)
	\]
be the functor induced by $\abs{\cdot}_2 : \b{Fr}(\b{Bas}(n)) \to \b{End}(\s{C})(n)$.
\end{defn}

\begin{lem}\label{lem:evsurjobj} The functors $\abs{\cdot} : \c{SM}_{\c{T}}(n) \to \b{End}(\s{C})(n)$ define an operad map, which evaluates to the NSMC structure on $\s{C}$.
\end{lem}

\begin{proof} The functors $\abs{\cdot}$ evaluate to the NSMC structure on $\s{C}$ by construction. We need to show that $\abs{\cdot}$ is an operad map. Since $\abs{\cdot} = \abs{\cdot}_3$ extends the operad map $\abs{\cdot}_1 : F(S_{\c{T}}) \to \b{End}(\s{C})$, this is immediate on the level of objects. It remains to check on morphisms.

First, we consider operadic identities. The map $\abs{\cdot}_1 : F(S_{\c{T}}) \to \t{Ob}(\b{End}(\s{C}))$ is an operad map. Therefore it preserves the operadic identity object, and therefore its functorial extension to $\abs{\cdot} : \c{SM}_{\c{T}} \to \b{End}(\s{C})$ preserves the identity morphism on the operadic identity.

Next, we consider $\Sigma$-equivariance. By functoriality, it will be enough to show that $\abs{\ol{b}\tau} = \abs{\ol{b}}\tau$ for every basic edge $b = (t,j,\eta,s_1,\dots,s_k,\sigma)$ in $\b{Fr}(\b{Bas})$ with residue class $\ol{b}$ in $\c{SM}_{\c{T}}$. Let $c = (t,j,\eta,s_1,\dots,s_k,\sigma\tau)$. Then
	\[
	\abs{c}_2 = \abs{t}_1 \circ_j \gamma(\eta^{\s{C}} ; \abs{s_1}_1,\dots,\abs{s_k}_1) \cdot \sigma\tau = \abs{b}_2 \tau = \abs{\ol{b}}\tau.
	\]
Moreover,
	\[
	\t{source}(c) = t \circ_j \gamma(\t{dom}(\eta);s_1,\dots,s_k) \cdot \sigma\tau = \t{source}(b)\tau
	\]
and $\t{target}(c) = \t{target}(b)\tau$ for similar reasons. Therefore $\ol{c}$ and $\ol{b}\tau$ are parallel in $\c{SM}_{\c{T}}$, and hence equal. Consequently, $\abs{\ol{b}\tau} = \abs{\ol{c}} = \abs{c}_2 = \abs{\ol{b}}\tau$.

Next, we consider $G$-equivariance. As above, it will be enough to show $\abs{g \ol{b}} = g \abs{\ol{b}}$ for every basic edge $b = (t,j,\eta,s_1,\dots,s_k,\sigma)$. There are two cases: either $\eta = r\ups_{T_i}^{\pm 1}$, or it does not. In the latter case, let $c = (gt,j,\eta,gs_1,\dots,gs_k,\sigma)$. Then
	\[
	\abs{c}_2 = \abs{gt}_1 \circ_j \gamma(\eta^{\s{C}} ; \abs{gs_1}_1 , \dots , \abs{gs_k}_1) \cdot \sigma =  g \cdot \Big( \abs{t}_1 \circ_j \gamma(\eta^{\s{C}} ; \abs{s_1}_1 , \dots , \abs{s_k}_1) \cdot \sigma \Big) = g \cdot \abs{\ol{b}}
	\]
because $\abs{\cdot}_1$ is an operad map and $\eta^{\s{C}}$ is $G$-fixed. As before, $\ol{c} = g\ol{b}$ because they are parallel, and therefore $\abs{g \ol{b}} = \abs{\ol{c}} = g \abs{\ol{b}}$.

Now suppose $\eta = r\ups_{T_i}$. The case where $\eta = r\ups_{T_i}^{-1}$ is similar. If $\eta = r\ups_{T_i}$, then
	\begin{align*}
		g \abs{\ol{b}} = \abs{gt}_1 \circ_j \gamma (g r\ups_{T_i}^{\s{C}} ; \abs{gs_1}_1,\dots,\abs{gs_k}_1) \cdot \sigma .
	\end{align*}
Writing $gr = r'h$ for a unique $G/H_i$ coset representative $r'$ and $h \in H$, the twisted equivariance of $\ups^{\s{C}}_{T_i}$ gives $gr\ups^{\s{C}}_{T_i} = \xi(h)^{-1}_{\s{C}} \bullet (r' \ups^{\s{C}}_{T_i} \cdot \xi(h))$, where $\xi(h)^{-1}_{\s{C}}$ is the symmetric monoidal coherence map that permutes the factors of $\otimes_{\abs{T_i}}(x_1,\dots,x_{\abs{T_i}})$ by $\xi(h)^{-1}$, and $(h,\xi(h))$ is an element of the graph subgroup $\Gamma(T_i)$. Therefore $g \abs{\ol{b}}$ is equal to
	\[
	\Big( \abs{gt}_1 \circ_j \gamma ( \xi(h)^{-1}_{\s{C}} ; \abs{gs_1}_1,\dots,\abs{gs_k}_1) \cdot \sigma \Big) \bullet \Big( \abs{gt}_1 \circ_j \gamma (r' \ups_{T_i}^{\s{C}} \cdot \xi(h) ; \abs{gs_1}_1,\dots,\abs{gs_k}_1) \cdot \sigma \Big).
	\]
Breaking $\xi(h)^{-1}_{\s{C}}$ apart in the left factor breaks that factor apart into the images of basic edges, say $b_1, \dots, b_n$, and pulling $\xi(h)$ out of the right factor makes that factor into the image of a basic edge $c$. Let $d = b_1 \bullet \cdots \bullet b_n \bullet c$. Then $\abs{g \ol{b}} = \abs{\ol{d}} = g\abs{\ol{b}}$.

Lastly, we consider composition. Suppose $\ol{p} : s \to s' \in \c{SM}_{\c{T}}(k)$ and $\ol{q_i} : t_i \to t_i' \in \c{SM}_{\c{T}}(j_i)$ are congruence classes of morphisms $p,q_i \in \b{Fr}(\b{Bas})$, for $i = 1 , \dots ,k$. We may write $p = b_1 \vc \cdots \vc b_n$ and $q_j = c_{j1} \vc \cdots \vc c_{j n_j}$, where each $b_i$ and $c_{jk}$ is basic, and then $\gamma(\abs{\ol{p}};\abs{\ol{q_1}},\dots,\abs{\ol{q_k}})$ factors as
	\[
	\gamma(\abs{\ol{b_1}};\t{id}) \vc \cdots \vc \gamma(\abs{\ol{b_n}};\t{id}) \vc \gamma(\t{id};\abs{\ol{c_{11}}},\t{id}) \vc \cdots \vc \gamma(\t{id};\abs{\ol{c_{1 n_1}}},\t{id}) \vc \cdots \vc \gamma(\t{id};\t{id},\dots,\abs{\ol{c_{k n_k}}})
	\]
Each of these factors lifts to a basic edge in $\b{Fr}(\b{Bas})$, and composing all of these lifts together yields a morphism
	\[
	d : \gamma(s;t_1,\dots,t_k) \to \gamma(s';t_1',\dots,t_k').
	\]
Then $\abs{\gamma(\ol{p},\ol{q_1},\dots,\ol{q_k})} = \abs{\ol{d}} = \gamma(\abs{\ol{p}};\abs{\ol{q_1}},\dots,\abs{\ol{q_k}})$.
\end{proof}

\begin{prop}\label{prop:evbijobj} The $2$-functor $\t{ev} : \c{SM}_{\c{T}}\t{-}\b{AlgLax} \to \c{T}\b{SMLax}$ is bijective on objects, and similarly in the pseudo and strict cases.
\end{prop}

\begin{proof} Combine Lemmas \ref{lem:evobjinj} and \ref{lem:evsurjobj}.
\end{proof}

\subsection{Bijectivity on morphisms} The bijectivity of the evaluation $2$-functor $\t{ev} : \c{SM}_{\c{T}}\t{-}\b{AlgLax} \to \c{T}\b{SMLax}$ on $1$-morphisms and $2$-morphisms is comparatively simple. We begin with $1$-morphisms.

\begin{defn}Let $G$ be a finite group, and suppose $F,H : \s{C} \rightrightarrows \s{D}$ are a pair of $G$-functors between $G$-categories. A \emph{(not necessarily natural) $G$-transformation $F \not\Rightarrow H$} is a family of morphisms
	\[
	\eta = (\eta_c : Fc \to Hc)_{c \in \s{C}}.
	\]
such that $g\eta_c = \eta_{gc}$ for all $c \in \s{C}$ and $g \in G$. Given any morphism $f : c \to c'$ in $\s{C}$, we say that $\eta$ is \emph{$f$-natural} if the square
	\[
	\begin{tikzpicture}
		\node(A) at (0,0) {$Fc$};
		\node(B) at (2,0) {$Hc$};
		\node(C) at (0,-2) {$Fc'$};
		\node(D) at (2,-2) {$Hc'$};
		\path[->]
		(A) edge [above] node {$\eta_c$} (B)
		(A) edge [left] node {$Ff$} (C)
		(B) edge [right] node {$Hf$} (D)
		(C) edge [below] node {$\eta_{c'}$} (D)
		;
	\end{tikzpicture}
	\]
commutes.
\end{defn}

\begin{lem}\label{lem:fnatGcat} Suppose $F , H : \s{C} \rightrightarrows \s{D}$ are $G$-functors and $\eta$ is a (not necessarily natural) $G$-transformation from $F$ to $H$. Then the morphisms $f \in \s{C}$ for which $\eta$ is $f$-natural form a $G$-subcategory of $\s{C}$, and $\eta$ is a $G$-natural transformation if and only if this subcategory is all of $\s{C}$.
\end{lem}

\begin{proof}The transformation is $\t{id}_c$-natural for every $c \in \s{C}$, and the collection of all $f$ for which $\eta$ is $f$-natural is closed under composition and the $G$-action.
\end{proof}

The notion of $f$-naturality also interacts well with operadic composition.

\begin{lem}\label{lem:fnatop} Let $\s{O}$ be an operad in $G$-categories and let $\abs{\cdot}_{\s{C}} : \s{O} \to \b{End}(\s{C})$ and $\abs{\cdot}_{\s{D}} : \s{O} \to \b{End}(\s{D})$ be $\s{O}$-algebras. Suppose:
	\begin{enumerate}
		\item{}$F : \s{C} \to \s{D}$ is a $G$-functor,
		\item{}$\vd_n : \abs{\cdot}_{\s{D}} \circ F^{\times n} \not\Rightarrow F \circ \abs{\cdot}_{\s{C}}$ is a not necessarily natural transformation for all $n \geq 0$, and
		\item{}condition (iv) of Definition \ref{defn:laxopmap} holds.
	\end{enumerate}
Given any morphisms $g : y \to y' \in \s{O}(k)$ and $f_i : x_i \to x_i' \in \s{O}(j_i)$ for $i = 1 , \dots, k$, if $\vd_k$ is $g$-natural and $\vd_{j_i}$ is $f_i$-natural for all $i$, then the transformation $\vd_{j_1 + \cdots + j_k}$ is $\gamma(g;f_1,\dots,f_k)$-natural.
\end{lem}

\begin{proof}The $\gamma(g;f_1,\dots,f_k)$-naturality square for $\vd_{j_1 + \cdots + j_k}$ factors as:	\[
	\begin{tikzpicture}
		\node(11) at (0,0) {$\abs{y}_{\s{D}} \circ (\abs{x_1}_{\s{D}} \times \cdots \times \abs{x_k}_{\s{D}}) \circ F^{\times \sum j_i}$};
		\node(12) at (7,0) {$\abs{y'}_{\s{D}} \circ (\abs{x_1'}_{\s{D}} \times \cdots \times \abs{x_k'}_{\s{D}}) \circ F^{\times \sum j_i}$};
		\node(21) at (0,-1.5) {$\abs{y}_{\s{D}} \circ F^{\times k} \circ (\abs{x_1}_{\s{C}} \times \cdots \times \abs{x_k}_{\s{C}})$};
		\node(22) at (7,-1.5) {$\abs{y'}_{\s{D}} \circ F^{\times k} \circ (\abs{x_1'}_{\s{C}} \times \cdots \times \abs{x_k'}_{\s{C}})$};
		\node(31) at (0,-3) {$F \circ \abs{y}_{\s{C}} \circ (\abs{x_1}_{\s{C}} \times \cdots \times \abs{x_k}_{\s{C}})$};
		\node(32) at (7,-3) {$F \circ \abs{y'}_{\s{C}} \circ (\abs{x_1'}_{\s{C}} \times \cdots \times \abs{x_k'}_{\s{C}})$};
		
		\path[->]
		(11) edge node {} (12)
		(21) edge node {} (22)
		(31) edge node {} (32)
		(11) edge node {} (21)
		(21) edge node {} (31)
		(12) edge node {} (22)
		(22) edge node {} (32)
		;
	\end{tikzpicture}
	\]
where all vertical maps are induced by $\vd_\bullet$, and all horizontal maps are induced by the $f_1, \dots, f_k$ and $g$. The top square commutes by the $f_i$-naturality of the $\vd_{j_i}$, and the bottom square commutes by the $g$-naturality of $\vd_k$. Therefore $\vd_{j_1 + \cdots + j_k}$ is $\gamma(g;f_1,\dots,f_k)$-natural.
\end{proof}

\begin{prop}\label{prop:evbijfun} Let $\c{T} = (T_i)_{i \in I}$ be a set of exponents, where each $T_i$ is a finite, ordered $H_i$-set. The $2$-functor $\t{ev} : \c{SM}_{\c{T}}\t{-}\b{AlgLax} \to \c{T}\b{SMLax}$ is bijective on $1$-morphisms, and similarly in the pseudo and strict cases.
\end{prop}

\begin{proof}Let $\s{C}$ and $\s{D}$ be $\c{T}$-NSMCs, and let $\abs{\cdot}_{\s{C}} : \c{SM}_{\c{T}} \to \b{End}(\s{C})$ and $\abs{\cdot}_{\s{D}} : \c{SM}_{\c{T}} \to \b{End}(\s{D})$ be the corresponding $\c{SM}_{\c{T}}$-algebra structures.

Suppose $(F,\vd_\bullet) , (H , \epsilon_\bullet) : (\s{C},\abs{\cdot}_{\s{C}}) \rightrightarrows (\s{D},\abs{\cdot}_{\s{D}})$ are lax $\c{SM}_{\c{T}}$-algebra morphisms that evaluate to the same lax $\c{T}$-normed functor. Then $F = H$, and $(\vd_0)_{e} = (\epsilon_0)_{e}$, $(\vd_2)_{\otimes} = (\epsilon_2)_{\otimes}$, and $(\vd_{\abs{T_i}})_{\btn_{T_i}} = (\epsilon_{\abs{T_i}})_{\btn_{T_i}}$ for every $i \in I$. By conditions (ii) and (iv) of Definition \ref{defn:laxopmap}, the set of all $x \in \c{SM}_{\c{T}}$ such that the equation 
	\[
	(\vd_{\t{arity}(x)})_x = (\epsilon_{\t{arity}(x)})_x
	\]
holds is closed under operadic composition and the $G \times \Sigma$-action, and the operations $e()$, $\otimes(x_1,x_2)$, and $\btn_{T_i}(x_1,\dots,x_{\abs{T_i}})$ generate $\t{Ob}(\c{SM}_{\c{T}}) = F(S_{\c{T}})$. Therefore $(\vd_{\t{arity}(x)})_x = (\epsilon_{\t{arity}(x)})_x$ for all $x$, and hence $(F,\vd_\bullet) = (H , \epsilon_\bullet)$. Thus $\t{ev}$ is injective.

Now suppose $(F,F_\bullet) : \s{C} \to \s{D}$ is a lax $\c{T}$-normed functor. We must construct an extension of $(F,F_\bullet)$ to a lax $\c{SM}_{\c{T}}$-algebra morphism. Recall  Notation \ref{nota:eltFS} and the operad $\c{L}ax = \c{L}ax(\c{SM}_{\c{T}},\s{C},\s{D},F)$ from Definition \ref{defn:laxmapop}. The axioms for a lax $\c{T}$-normed functor imply that there is a map $S_{\c{T}} \to \c{L}ax$ of symmetric sequences, which sends $e$ to $(e(),F_e)$, $\otimes$ to $(\otimes(x_1,x_2),F_\otimes)$, and $\btn_{T_i}$ to $(\btn_{T_i}(x_1,\dots,x_{\abs{T}}),F_{\btn_{T_i}})$ for all $i \in I$. This freely extends to an operad map
	\[
	\Phi : \t{Ob}(\c{SM}_{\c{T}}) = F(S_{\c{T}}) \to \c{L}ax ,
	\]
and the composite $\pi_1 \circ \Phi : F(S_{\c{T}}) \to F(S_{\c{T}})$ fixes the generators of $F(S_{\c{T}})$. Therefore $\pi_1 \circ \Phi = \t{id}$, and we define $(\vd_n)_x$ by the equation
	\[
	\Phi_n(x) = \Big( x,(\vd_n)_x : \abs{x}_{\s{D}} \circ F^{\times n} \Rightarrow F \circ \abs{x}_{\s{C}} \Big).
	\]
By Proposition \ref{prop:laxopmap}, the data $(F , \vd_\bullet)$ satisfies axioms (ii)-(iv) of Definition \ref{defn:laxopmap}, so it remains to check (i), i.e. the naturality of $(\vd_n)_x$ in $x$. The axioms of a lax $\c{T}$-normed functor imply that $\vd_\bullet$ is natural for all of the morphisms $\alpha^{\pm 1}$, $\lambda^{\pm 1}$, $\rho^{\pm 1}$, $\beta$, and $\ups_T^{\pm 1}$ introduced in Notation \ref{nota:eltFS}. These generate every morphism in $\c{SM}_{\c{T}}$ under the $G$-category and operad structures, so by Lemmas \ref{lem:fnatGcat} and \ref{lem:fnatop} it follows that every transformation $(\vd_n)_x$ varies naturally in $x$. Thus $(F,\vd_\bullet) : ( \s{C} ,\abs{\cdot}_{\s{C}}) \to (\s{D} , \abs{\cdot}_{\s{D}})$ is a lax $\c{SM}_{\c{T}}$-algebra morphism, and it evaluates to $(F , F_\bullet)$ by construction.
\end{proof}

\begin{prop}\label{prop:evbijtrans} The $2$-functor $\t{ev} : \c{SM}_{\c{T}}\t{-}\b{AlgLax} \to \c{T}\b{SMLax}$ is bijective on $2$-morphisms, and similarly in the pseudo and strict cases.
\end{prop}

\begin{proof}The $2$-functor $\t{ev}$ does nothing to underlying $G$-natural transformations, and therefore $\t{ev}$ is injective on $2$-morphisms. 

Now for surjectivity. Suppose that $(F,F_\bullet) , (H, H_\bullet) : \s{C} \rightrightarrows \s{D}$ are lax $\c{T}$-normed functors and let $(F, \vd_\bullet), (H, \epsilon_\bullet) : (\s{C},\abs{\cdot}_{\s{C}}) \rightrightarrows (\s{D},\abs{\cdot}_{\s{D}})$ be the corresponding lax $\c{SM}_{\c{T}}$-algebra morphisms. Given any $\c{T}$-transformation $\omega : (F,F_\bullet) \Rightarrow (H, H_\bullet)$, we must show that $\omega$ is also a $\c{SM}_{\c{T}}$-transformation $\omega : (F, \vd_\bullet) \Rightarrow (H, \epsilon_\bullet)$. The set of all $x \in \c{SM}_{\c{T}}$ such that
	\[
	(\epsilon_{\t{arity}(x)})_x \bullet (\t{id}_{\abs{x}_{\s{D}}} \circ \omega^{\times \t{arity}(x)})
	= 
	(\omega \circ \t{id}_{\abs{x}_{\s{C}}}) \bullet (\vd_{\t{arity}(x)})_x
	\]
is closed under the $G \times \Sigma$-action and operadic composition. Here $\bullet$ denotes vertical composition and $\circ$ denotes horizontal composition. Since $\omega$ is a $\c{T}$-transformation, the equation above holds for the generators $e(), \otimes(x_1,x_2) , \btn_{T_i}(x_1,\dots,x_{\abs{T_i}}) \in \c{SM}_{\c{T}}$, and hence for all $x \in \c{SM}_{\c{T}}$.
\end{proof}

\section{NSMCs with strict relations}\label{sec:TRNSMC}

In this final section, we study NSMCs in which some coherence isomorphisms are identity maps, and we prove the analogue to Theorem \ref{thm:coh} for these structures (Theorem \ref{thm:cohR}). The key point is that the right adjoint $\widetilde{(-)}$ preserves quotients (Proposition \ref{prop:tilquot}). This fortuitous fact allows us to identify the operads that parametrize NSMCs with strict relations, and we conclude with a brief discussion of equivariant permutative categories.

\begin{defn}\label{defn:grrel} Suppose $X = (X_n)_{n \geq 0}$ is a sequence of sets. A \emph{graded binary relation on $X$} is a sequence $R = (R_n)_{n \geq 0}$ such that $R_n$ is a binary relation on the set $X_n$ for every $n \geq 0$. Given any $x$ and $y$, we shall sometimes write $x R y$ if there is some $n \geq 0$ such that $x,y \in X_n$ and $x R_n y$.
\end{defn}

\begin{defn}\label{defn:NRNSMC} Let $\c{T}$ be a set of exponents, and let $\s{C}$ be a $\c{T}$-NSMC whose $\c{SM}_{\c{T}}$-algebra structure is parametrized by the map
	\[
	\abs{\cdot}_{\s{C}} : \c{SM}_{\c{T}} \to \b{End}(\s{C}).
	\]
Suppose $R$ be a graded binary relation on $F(S_{\c{T}}) = \t{Ob}(\c{SM}_{\c{T}})$. Then $\s{C}$ is a \emph{$(\c{T},R)$-NSMC} if, for any $n \geq 0$ and $x,y \in F(S_{\c{T}})(n)$ such that $x R_n y$, we have
	\[
	\abs{x}_{\s{C}} = \abs{y}_{\s{C}}	\quad\t{and}\quad	\abs{x \to y}_{\s{C}} = \t{id}_{\abs{x}_{\s{C}}} = \t{id}_{\abs{y}_{\s{C}}}.
	\]
Let $(\c{T},R)\b{SMLax}$ be the full $2$-subcategory of $\c{T}\b{SMLax}$ whose objects are the $(\c{T},R)$-NSMCs, and similarly in the pseudo and strict cases.
\end{defn}

\begin{ex} A permutative category is a $(\c{T},R)$-NSMC for $G = *$, $\c{T} = \varnothing$, and
	\[
	R = \Big\{ \otimes(\otimes(x_1,x_2),x_3) \sim \otimes(x_1,\otimes(x_2,x_3)) \,\, , \,\, \otimes(e(),x_1) \sim x_1 \,\, , \,\, \otimes(x_1,e()) \sim x_1 \Big\}.
	\]
More generally, if $\s{C}$ is a permutative category and $J$ is a small right $G$-category, then the functor $G$-category $\b{Fun}(J,\s{C})$ considered in \S\ref{sec:FunNSMC} is a NSMC with strict relations. As explained previously, it has an external norm for every finite $H$-set $T$ such that $\bigcup_{j \in J} \t{Stab}_H(j) \subset \bigcap_{t \in T} \t{Stab}_H(t)$, and its levelwise symmetric monoidal structure is strictly associative and unital. There are other strict relations, but they are harder to pin down.
\end{ex}

We shall prove that $(\c{T},R)$-NSMC structures are parametrized by a quotient of $\c{SM}_{\c{T}}$, so we briefly recall the relevant formalism (cf. \cite[\S5.2]{RubComb}).

\begin{defn} Suppose $\s{O}$ is a discrete operad in $\b{Set}^G$, and $\sim$ is a graded binary relation on $\s{O}$. Then $\sim$ is a \emph{congruence relation} if
	\begin{enumerate}[label=(\roman*)]
		\item{}for each integer $n \geq 0$, $\sim_n$ is an equivalence relation on $\s{O}(n)$,
		\item{}for each integer $n \geq 0$, pair $(g, \sigma) \in G \times \Sigma_n$, and elements $x, x' \in \s{O}(n)$, if $x \sim_n x'$, then $g x \sigma \sim_n g x' \sigma$, and
		\item{}for all elements $y,y' \in \s{O}(k)$ and $x_i,x'_i \in \s{O}(j_i)$ for $i = 1, \dots , k$, if $y \sim_k y'$ and $x_i \sim_{j_i} x'_i$ for all $i$, then $\gamma(y;x_1, \dots , x_k) \sim_{\Sigma j_i} \gamma(y'; x'_1 , \dots , x'_k)$.
	\end{enumerate}
Every graded binary relation $R$ on $\s{O}$ generates a congruence relation $\la R \ra$, which is the intersection of all congruence relations containing $R$. The relation $\la R \ra$ is the smallest congruence relation that contains $R$.
\end{defn}

The usual arguments show that the levelwise quotient of an operad by a congruence relation is an operad with the expected universal property.

\begin{prop}If $R$ is a graded binary relation on an operad $\s{O}$ in $\b{Set}^G$, then the levelwise quotient $\s{O}/ \la R \ra = (\s{O}(n)/ \la R \ra_n )$ is an operad with:
	\begin{enumerate}
		\item{}identity element $[\t{id}]$, 
		\item{}$G \times \Sigma$-action $g[x]\sigma = [g x \sigma]$, and 
		\item{}composition $\gamma([y];[x_1],\dots,[x_k]) = [\gamma(y;x_1,\dots,x_k)]$.
	\end{enumerate}
The projection $\pi : \s{O} \to \s{O}/ \la R \ra$ is an operad map that sends $R$-related elements of $\s{O}$ to equal elements of $\s{O} / \la R \ra$, and it is initial with this property.
\end{prop}

Next, recall that $\til{(-)} : \b{Set} \to \b{Cat}$ denotes the right adjoint to the object functor $\t{Ob} : \b{Cat} \to \b{Set}$. It is not generally colimit preserving; for example, it does not send coproducts to coproducts. Nevertheless, it does preserve quotients of the form
	\[
	\pi : \s{O} \to \s{O}/\langle R \rangle,
	\]
when applied levelwise. This surprising feature of the functor $\til{(-)}$ is the key to proving the coherence theorem for $(\c{T},R)$-NSMCs.

\begin{prop}\label{prop:tilquot} Let $\s{O}$ be an operad in $\b{Set}^G$, let $R$ be a graded binary relation on $\s{O}$, and consider the operad map $\til{\pi} : \til{\s{O}} \to \til{\s{O}/\la R \ra}$. Then 
	\[
	\til{\pi}(x) = \til{\pi}(y)	\quad\t{and}\quad	\til{\pi}(x \to y) = \t{id}_{\til{\pi}(x)} = \t{id}_{\til{\pi}(y)}
	\]
whenever $x R y$, and the map $\til{\pi} : \til{\s{O}} \to \til{\s{O}/\la R \ra}$ is initial with this property.
\end{prop}

\begin{proof} All of the the claims about $\til{\pi}$ are clear except for the universal property. So suppose $\vp : \til{\s{O}} \to \s{Q}$ is an operad map in $\b{Op}(\b{Cat}^G)$ such that
	\begin{equation*}\tag{$*$}
	\vp(x) = \vp(y)	\quad\t{and}\quad	\vp(x \to y) = \t{id}_{\vp(x)} = \t{id}_{\vp(y)}
	\end{equation*}
whenever $x R y$. We must construct a unique factorization of $\vp$ through $\til{\pi}$. If such a map $\vp_* : \til{\s{O}/ \la R \ra} \to \s{Q}$ exists, then it is unique because $\til{\pi}$ is surjective. Thus, we need to show that $\vp_*$ exists. Define a graded relation $E$ on $\s{O}$ by
	\begin{center}
	$x E y$ if and only if condition $(*)$ holds.
	\end{center}
Then $E$ is a congruence relation on $\s{O}$, and $R \subset E$. Therefore $\la R \ra \subset E$, i.e. if $x \la R \ra y$, then $x E y$.

Now we construct the factorization of $\vp : \til{\s{O}} \to \s{Q}$ through $\til{\pi} : \til{\s{O}} \to \til{\s{O}/\la R \ra}$. Consider the map on objects
	\[
	\vp : \s{O} \to \t{Ob}(\s{Q})
	\]
first. If $x,y \in \s{O}$ and $x R y$, then $x E y$ and hence $\vp(x) = \vp(y)$. By the universal property of quotients, there is an induced operad map
	\[
	\vp_* : \s{O}/\la R \ra \to \t{Ob}(\s{Q})
	\]
defined by $\vp_*([x]) = \vp(x)$. Next, we extend $\vp_*$ to morphisms. Given any $[x] \to [y]$ in $\til{\s{O}/\la R \ra}$, choose representatives $x \in [x]$ and $y \in [y]$, and define
	\[
	\vp_*([x] \to [y]) = {\vp}(x \to y).
	\]
The value of ${\vp}(x \to y)$ does not depend on the choice. Indeed, if $x' \in [x]$ and $y' \in [y]$ are other representatives, then the below left commutative square in $\til{\s{O}}$ maps to the below right commutative square in $\s{Q}$
	\[
	\begin{tikzpicture}
		\node(A) at (0,0) {$x$};
		\node(B) at (0,-1.5) {$x'$};
		\node(C) at (2,0) {$y$};
		\node(D) at (2,-1.5) {$y'$};
		
		\path[->]
		(A) edge node {} (B)
		(A) edge node {} (C)
		(C) edge node {} (D)
		(B) edge node {} (D)
		;
		
		\node(A') at (4.5,0) {$\vp(x)$};
		\node(B') at (4.5,-1.5) {$\vp(x')$};
		\node(C') at (7,0) {$\vp(y)$};
		\node(D') at (7,-1.5) {$\vp(y')$};
		
		\path[->]
		(A') edge [left] node {$\t{id}$} (B')
		(A') edge [above] node {$\vp(x \to y)$} (C')
		(C') edge [right] node {$\t{id}$} (D')
		(B') edge [below] node {$\vp(x' \to y')$} (D')
		;
	\end{tikzpicture}
	\]
because $a \la R \ra b$ implies $a E b$, and hence $\vp(a \to b) = \t{id}$. It is straightforward to check that $\vp_* : \til{\s{O}/\la R \ra} \to \s{Q}$ is a map of operads in $\b{Cat}^G$, and the equation $\vp_* \circ \til{\pi} = \vp$ holds by construction.
\end{proof}

Thus, an $\til{\s{O}/\la R \ra}$-algebra structure on a $G$-category $\s{C}$ is precisely the same thing as an $\til{\s{O}}$-algebra structure on $\s{C}$ such that every morphism $x \to y$ between $R$-related operations $x,y \in \til{\s{O}}$ is an identity transformation in $\b{End}(\s{C})$.

Now specialize to the case $\s{O} = F(S_{\c{T}}) = \t{Ob}(\c{SM}_{\c{T}})$.

\begin{defn}\label{defn:SMNR} Suppose $\c{T} = (T_i)_{i \in I}$ is a set of exponents and $R$ is a graded binary relation on $F(S_{\c{T}})$  (cf. Definition \ref{defn:SMT}). We define
	\[
	\c{SM}_{(\c{T},R)} = \til{F(S_{\c{T}})/\la R \ra} ,
	\]
and we write $\til{\pi} : \c{SM}_{\c{T}} \to \c{SM}_{(\c{T},R)}$ for the quotient map. Recalling Notation \ref{nota:eltFS}, we write $[\otimes] , [e] , [\btn_{T_i}] , [\alpha] , [\lambda], \dots \in \c{SM}_{(\c{T},R)}$, for the images of the corresponding objects and morphisms in $\c{SM}_{\c{T}}$.
\end{defn}

We shall momentarily exploit the universal property of $\c{SM}_{\c{T}} \to \c{SM}_{(\c{T},R)}$ to identity the $2$-category of $\c{SM}_{(\c{T},R)}$-algebras with the $2$-category of $(\c{T},R)$-NSMCs, but first we consider pullback $2$-functors.

\begin{defn} Suppose $\vp = (\vp_n)_{n \geq 0} : \s{Q} \to \s{R}$ is a morphism of operads in $\b{Cat}^G$. The pullback $2$-functor
	\[
	\vp^* : \s{R}\t{-}\b{AlgLax} \to \s{Q}\t{-}\b{AlgLax}
	\]
is defined as follows:
	\begin{enumerate}
		\item{}For any $\s{R}$-algebra $\s{C}$ with structure map $\abs{\cdot}_{\s{C}} : \s{R} \to \b{End}(\s{C})$, we define $\vp^*(\s{C},\abs{\cdot}_{\s{C}}) = (\s{C} , \abs{\cdot}_{\s{C}} \circ \vp)$, i.e. $\abs{\cdot}_{\vp^*\s{C}} = \abs{\cdot}_{\s{C}} \circ \vp$.
		\item{}For any 1-morphism $(F,\vd_\bullet) : (\s{C},\abs{\cdot}_{\s{C}}) \to (\s{D},\abs{\cdot}_{\s{D}})$, we define $\vp^*(F,\vd_\bullet) = (F , (\vd_n \circ \vp_n)_{n \geq 0})$, i.e. $\vp^*\vd_n = \vd_n \circ \vp_n$.
		\item{}For any 2-morphism $\omega : (F,\vd_\bullet) \Rightarrow (F',\vd'_\bullet)$, we define $\vp^* \omega = \omega$.
	\end{enumerate}
The same definitions define $2$-functors in the pseudo and strict cases.
\end{defn}

Checking that these formulas define $2$-functors is a simple matter of unwinding the definitions. The following lemma is also a quick check.

\begin{lem}If $\s{C}$ is a $\c{SM}_{(\c{T},R)}$-algebra, then $\t{ev}(\til{\pi}^*\s{C})$ is a $(\c{T},R)$-NSMC.
\end{lem}

This justifies the next definition.

\begin{defn}\label{defn:evquot} Let $\c{T} = (T_i)_{i \in I}$ be a set of exponents and $R$ a graded binary relation on $F(S_{\c{T}})$. The evaluation $2$-functor
	\[
	\t{ev} : \c{SM}_{(\c{T},R)}\t{-}\b{AlgLax} \to (\c{T},R)\b{SMLax}
	\]
is the induced $2$-functor
	\[
	\begin{tikzpicture}
		\node(A) at (0,0) {$\c{SM}_{(\c{T},R)}\t{-}\b{AlgLax}$};
		\node(B) at (0,-1.5) {$\c{SM}_{\c{T}}\t{-}\b{AlgLax}$};
		\node(C) at (4,-1.5) {$\c{T}\b{SMLax}$};
		\node(D) at (4,0) {$(\c{T},R)\b{SMLax}$};
		\path[->]
		(A) edge [left] node {$\til{\pi}^*$} (B)
		(B) edge [below] node {$\t{ev}$} (C)
		(D) edge [right] node {$\t{inc}$} (C)
		;
		\path[->,dashed]
		(A) edge [above] node {$\t{ev}$} (D)
		;
	\end{tikzpicture}
	\]
and similarly in the pseudo and strict cases. The top evaluation $2$-functor sends a $\c{SM}_{(\c{T},R)}$-algebra $\abs{\cdot}_{\s{C}} : \c{SM}_{(\c{T},R)} \to \b{End}(\s{C})$ to the $(\c{T},R)$-NSMC
	\[
	\Big( \s{C} , \abs{[\otimes]}_{\s{C}} , \abs{[e]}_{\s{C}} , (\abs{[\sbtn_{T_i}]}_{\s{C}})_{i \in I} , \abs{[\alpha]}_{\s{C}}, \abs{[\lambda]}_{\s{C}} , \abs{[\rho]}_{\s{C}}, \abs{[\beta]}_{\s{C}} , (\abs{[\ups_{T_i}]}_{\s{C}})_{i \in I} \Big),
	\]
it sends a $\c{SM}_{(\c{T},R)}$-morphism $(F,\vd_\bullet)$ to the morphism
	\[
	\Big( F , (\vd_2)_{[\otimes]} , (\vd_0)_{[e]} , ((\vd_{\abs{T_i}})_{[\btn_{T_i}]})_{i \in I} \Big) ,
	\]
and it does nothing to $\c{SM}_{(\c{T},R)}$-transformations.
\end{defn}

Finally, we arrive at the coherence theorem for $(\c{T},R)$-NSMCs.

\begin{thm}\label{thm:cohR} Let $\c{T}$ be a set of exponents and $R$ a graded binary relation on $F(S_{\c{T}}) = \t{Ob}(\c{SM}_{\c{T}})$. There is a commutative triangle
	\[
	\begin{tikzpicture}
		\node(A) at (0,0) {$\c{SM}_{(\c{T},R)}\t{-}\b{AlgLax}$};
		\node(B) at (4,0) {$(\c{T},R)\b{SMLax}$};
		\node(C) at (2,-2) {$\ub{Cat}^G$};
		\path[->]
		(A) edge [above] node {$\t{ev}$} (B)
		(A) edge [below left] node {$\t{forget}$} (C)
		(B) edge [below right] node {$\t{forget}$} (C)
		;
	\end{tikzpicture}
	\]
of $2$-categories and $2$-functors, and the evaluation $2$-functor $\t{ev}$ is an isomorphism. Similarly in the strong and strict cases.
\end{thm}

\begin{proof} We check that $\t{ev}$ is bijective on objects, $1$-morphisms, and $2$-morphisms.  Injectivity in all three cases is clear, because $\til{\pi}^*$ and evaluation for $\c{T}$-NSMCs are both injective. We treat surjectivity on a case-by-case basis.

Suppose $\s{C}$ is a $(\c{T},R)$-NSMC. By Theorem \ref{thm:coh}, there is a corresponding $\c{SM}_{\c{T}}$-algebra structure $\abs{\cdot}_{\s{C}} : \c{SM}_{\c{T}} \to \b{End}(\s{C})$, and by definition, if $xRy$, then $\abs{x}_{\s{C}} = \abs{y}_{\s{C}}$ and $\abs{x \to y}_{\s{C}} = \t{id}$. Proposition \ref{prop:tilquot} applied to $\s{O} = F(S_{\c{T}})$ implies that $\abs{\cdot}_{\s{C}}$ factors through $\til{\pi}$ as a morphism $\abs{\cdot}_{\til{\pi}_*\s{C}} : \c{SM}_{(\c{T},R)} \to \b{End}(\s{C})$. Therefore $\t{ev}$ is surjective on objects.

Next, suppose $(F,F_\bullet) : \s{C} \to \s{D}$ is a $1$-morphism between $(\c{T},R)$-NSMCs. By Theorem \ref{thm:coh}, there is a $\c{SM}_{\c{T}}$-morphism $(F,\vd_\bullet) : (\s{C},\abs{\cdot}_{\s{C}}) \to (\s{D},\abs{\cdot}_{\s{D}})$ that evaluates to $(F,F_\bullet)$. Define a graded relation $\sim$ on $F(S_{\c{T}})$ by
	\begin{center}
		$x \sim y$ if and only if $\abs{x}_{\s{C}} = \abs{y}_{\s{C}}$, $\abs{x}_{\s{D}} = \abs{y}_{\s{D}}$, and $(\vd_{\t{arity}(x)})_x = (\vd_{\t{arity}(y)})_y$.
	\end{center}
Then $\sim$ is a congruence relation, and the naturality of $(\vd_n)_x$ in $x$ implies that $R \subset \,\, \sim$. Therefore $\la R \ra \subset \,\, \sim$, and hence the natural transformations
	\[
	(\ve_n)_{[x]} := (\vd_n)_x
	\]
are well-defined, where $[x]$ denotes the class of $x$ in $F(S_{\c{T}})/\la R \ra$. Straightforward checks show that $(F,\ve_\bullet) : \til{\pi}_*\s{C} \to \til{\pi}_*\s{D}$ is a $\c{SM}_{(\c{T},R)}$-morphism, and it evaluates to $(F,F_\bullet)$ by construction. Therefore $\t{ev}$ is surjective on $1$-morphisms.

Lastly, we consider $2$-morphisms. Suppose that $\omega : (F,F_\bullet) \Rightarrow (F',F'_\bullet)$ is a $\c{T}$-transformation between $1$-morphisms of $(\c{T},R)$-NSMCs. Then by Theorem \ref{thm:coh}, $\omega$ is also a transformation between the corresponding $\c{SM}_{\c{T}}$-morphisms $(F,\vd_\bullet)$ and $(F',\vd'_\bullet)$. It follows that $\omega$ is a $\c{SM}_{(\c{T},R)}$-transformation, because the map $\til{\pi} : \c{SM}_{\c{T}} \to \c{SM}_{(\c{T},R)}$ is surjective.
\end{proof}

In principle, this theorem identifies the algebras over a significant class of operads. Say that an operad $\s{O}$ in $\b{Cat}^G$ is \emph{codiscrete} if $\s{O} = \til{\t{Ob}(\s{O})}$.

\begin{cor} Suppose $\s{O}$ is a $\Sigma$-free codiscrete operad in $\b{Cat}^G$ such that $\s{O}(n)^G$ is nonempty for all $n \geq 0$. Then there is a set of exponents $\c{T}$ and a graded binary relation $R$ on $\t{Ob}(\c{SM}_{\c{T}})$ such that $\s{O}\t{-}\b{AlgLax} \cong (\c{T},R)\b{SMLax}$. Similarly in the strong and strict cases.
\end{cor}

\begin{proof}It will suffice to show that $\s{O} \cong \c{SM}_{(\c{T},R)}$ for some $\c{T}$ and $R$. Consider the object operad $\t{Ob}(\s{O})$. The counit $\ve : FU\t{Ob}(\s{O}) \to \t{Ob}(\s{O})$ of the free-forgetful adjunction $F : \b{Sym}(\b{Set}^G) \rightleftarrows \b{Op}(\b{Set}^G) : U$ is levelwise surjective, and if we define $xRy$ if and only if $\ve(x) = \ve(y)$, then $\ve$ induces an isomorphism $FU\t{Ob}(\s{O})/\la R \ra \cong \t{Ob}(\s{O})$. Now consider the symmetric sequence $U\t{Ob}(\s{O})$. Since $\s{O}$ is $\Sigma$-free and $\s{O}(n)^G \neq \varnothing$ for all $n$, we can choose a set of exponents $\c{T}$ such that $U\t{Ob}(\s{O}) \cong S_{\c{T}}$ (cf. Definition \ref{defn:SMT}). Therefore $\t{Ob}(\s{O}) \cong F(S_{\c{T}})/\la R \ra$ and
	\[
	\s{O} \cong \til{\t{Ob}(\s{O})} \cong \til{F(S_{\c{T}})/\la R \ra} = \c{SM}_{(\c{T},R)}. \qedhere
	\]
\end{proof}

Of course, explicitly determining the exponents and relations for a particular codiscrete operad $\s{O}$ is another matter. In view of the description of $F(S_{\c{T}})$ in \S\ref{subsec:SMTdesc}, identifying $\t{Ob}(\s{O})$ with $F(S_{\c{T}})/\la R \ra$ amounts to solving a word problem, which is often nontrivial. We find it easier to start with the desired generators and relations $(\c{T},R)$, and then to analyze the resulting operad $\c{SM}_{(\c{T},R)}$ as necessary.

\begin{ex}[Equivariant permutative categories] Theorem \ref{thm:cohR} identifies permutative categories with algebras over the operad
	\[
	\t{codisc}\Bigg( \frac{F(\Sigma_0/e \sqcup \Sigma_2/e)}
	{
	\Big\langle \!\!
	\otimes\!(\otimes(x_1,x_2),x_3) \sim \otimes(x_1,\otimes(x_2,x_3)) \,\, , \,\, \otimes(e(),x_1) \sim x_1 \sim \otimes(x_1,e())
	\Big\rangle
	}
	\Bigg).
	\]
This is precisely the permutativity operad $\s{P}$, so we have recovered the classical observation in \cite{MayPerm}.

The situation for equivariant permutative categories is somewhat more subtle. One approach, taken by Guillou and May \cite{GM}, starts from the operadic standpoint. Define the \emph{$G$-permutativity operad} by
	\[
	\s{P}_G = \b{Fun}(\til{G},\s{P}) \cong \til{\b{Set}(G,\b{As})}
	\]
and define a \emph{permutative $G$-category} to be a $\s{P}_G$-algebra. Such categories play a fundamental role in Guillou--May--Merling--Osorno's infinite loop space theory. Moreover, if $\s{C}$ is an ordinary permutative category, then $\b{Fun}(\til{G},\s{C}) \cong \b{Fun}(\bb{T}G,\s{C})$ is a permutative $G$-category, which furnishes a good supply of examples. 

Although the $G$-permutativity operad is easy to define and permutative $G$-categories have many desirable properties, there are some drawbacks to this approach. To start, the operad $\s{P}_G$ is not finitely generated \cite[Theorem 2.13]{BBKOOTX} and there is no known presentation of its object operad $\b{Set}(G,\b{As})$. These issues can be addressed in two special cases: when $G = C_2$ or $C_3$, there is an equivalent suboperad $\s{Q}_G \subset \s{P}_G$, whose object operad is finitely presented, and whose algebras can be understood in terms of Theorem \ref{thm:cohR} (loc. cit. Theorem 4.3). We know of no general solution to these problems, however. Another drawback to the operad $\s{P}_G$ is that it does not easily generalize to the $N_\infty$ setting. Not every $N_\infty$ operad is modeled by an operad of the form $\b{Fun}(J,\s{P})$ (cf. \cite[Counterexample 3.10]{RubComb}) or by a suboperad of $\s{P}_G$ (cf. \cite[Example B.2.1]{Bonventre}). These issues were sticking points in $N_\infty$ theory for some time.

We now present another approach to equivariant permutative categories, which is framed in terms of generators and relations, and which is compatible with $N_\infty$ theory. Suppose $\c{T} = (T_i)_{i \in I}$ is a set of exponents. We define a \emph{$\c{T}$-normed permutative category} ($\c{T}$-NPC) to be a $\c{T}$-NSMC such that the underlying symmetric monoidal structure is permutative, and the untwistors
	\[
	(\ups_{T_i})_{e,\dots,C, \dots, e}  \quad\t{and}\quad	(\ups_{T_i})_{e,\dots,e}
	\]
are identity maps for all $i \in I$ and $C \in \s{C}$. When $\abs{T_i} = 1$, we understand the left map to be $(\ups_{T_i})_C : \btn_{T_i}(C) \to C$, and when $\abs{T_i} = 0$, we understand the right map to be $\ups_{T_i} : \btn_{T_i}() \to e$. These conditions on untwistors ensure that there are no exotic nullary or unary operations.

The operad for $\c{T}$-NPCs is defined in parallel. Let $\b{As}_{\c{T}}$ denote the operad $F(S_{\c{T}})$ (cf. Definition \ref{defn:SMT}) modulo the relations
	\begin{align*}
		&\otimes(\otimes(x_1,x_2),x_3) \sim \otimes(x_1,\otimes(x_2,x_3))	\,\, , \,\, \otimes(e(),x_1) \sim x_1 \sim \otimes(x_1,e())	\\
		& \sbtn_{T_i}(e(),\dots,x_1,\dots,e()) \sim x_1	\\
		& \sbtn_{T_i}(e(),\dots,e()) \sim e()	
	\end{align*}
where $i$ ranges over all $i \in I$ in the second and third lines.	If $\abs{T_i} = 1$, then we understand the second line to be $\btn_{T_i}(x_1) \sim x_1$, and if $\abs{T_i} = 0$, then we understand the third line to be $\btn_{T_i}() \sim e()$. Define the $\c{T}$-permutativity operad to be $\s{P}_{\c{T}} = \til{\b{As}_{\c{T}}}$. Then Theorem \ref{thm:cohR} identifies $\c{T}$-NPCs with $\s{P}_{\c{T}}$-algebras.
\end{ex}

As with $\c{T}$-NSMCs, $\c{T}$-NPCs model $N_\infty$ spaces.

\begin{thm}\label{thm:NPCmodel} Let $\c{T}$ be a set of exponents and let $\s{C}$ be a $\c{T}$-NPC. Then the classifying space $B\s{C}$ is an algebra over the $N_\infty$ operad $B\s{P}_{\c{T}}$. Moreover, the class of admissible sets of $B\s{P}_{\c{T}}$ is the indexing system generated by $\c{T}$.
\end{thm}

\begin{proof} The operad $\b{As}_{\c{T}}$ is a $N$ operad whose class of admissible sets is the indexing system generated by $\c{T}$ (cf. \cite[Lemma 7.8]{RubComb}). From here, we can mimic the proof of Theorem \ref{thm:NSMCmodel}.
\end{proof}

\end{document}